\newif\ifshort
\newtheorem{theorem}{Theorem}[section]
\newtheorem{lemma}[theorem]{Lemma}
\newtheorem{proposition}[theorem]{Proposition}
\newtheorem{definition}{Definition}[section]
\newcommand{\vaia}[1]
{%
\ifshort
\cite{mas19}%
\else
Section~\ref{#1} in appendix%
\fi
}%
\newtheorem{example}{Example}[section]
\newcommand{\GG}{g_1,\ldots,g_m}
\newcommand{\qd}{}
\newcommand{\SoS}{\text{\sc{SoS}}}
\newcommand{\sos}{\text{\sc{SoS}}}
\newcommand{\supp}{supp}
\newcommand{\y}{y^{N}}
\newcommand{\eps}{\varepsilon}
\newcommand{\F}{\mathcal{F}}
\newcommand{\FB}{\mathcal{F}_{01}}
\newcommand{\GB}{\text{Gr\"{o}bner }}
\newcommand{\FC}{FC}
\newcommand{\R}{\mathbb{R}}
\newcommand{\C}{\mathcal{C}}
\newcommand{\yy}[1]{y{\left [ #1 \right]}}
\newcommand{\PEXP}[1]{\tilde{\mathbb{E}}{\left [ #1 \right]}}
\newcommand{\set}[1]{\left\{ #1 \right\}}
\newcommand{\PS}{\mathcal{P}}
\newcommand{\conv}{\text{conv}}
\newcommand{\Ss}{\mathcal{S}}
\newcommand{\Ideal}[1]{{\mathbf{I}}\left( #1 \right)}
\newcommand{\IB}{{\mathbf{I}}_{01}}
\newcommand{\I}{{\mathbf{I}}}
\newcommand{\Pp}{\mathcal{P}}
\newcommand{\Field}{\mathbb{F}}
\newcommand{\dotp}[1]{\left\langle #1 \right\rangle}
\newcommand{\spn}[1]{\left\langle #1 \right\rangle}
\newcommand{\spns}{\left\langle \Ss \right\rangle}
\newcommand{\sap}{\Ss_A(\pi)}
\newcommand{\cone}{\mathbf{qmodule}}
\newcommand{\x}{x}
\newcommand{\Zz}{\mathbb{Z}}
\newcommand{\N}{\mathbb{N}}
\title{High Degree Sum of Squares Proofs, Bienstock-Zuckerberg Hierarchy  and
 Chv\'{a}tal-Gomory Cuts
\thanks{Preliminary version appeared in IPCO'17 \cite{Mastrolilli17}.
\ifshort
\funding{Supported by the Swiss National Science Foundation project 200020-169022 ``Lift and Project Methods for Machine Scheduling Through Theory and Experiments''.}
\else
Supported by the Swiss National Science Foundation project 200020-169022 ``Lift and Project Methods for Machine Scheduling Through Theory and Experiments''.
\fi
}}
\author{Monaldo Mastrolilli\thanks{IDSIA, Lugano, Switzerland. (\email{monaldo@idsia.ch}).}}
\author{ Monaldo Mastrolilli\\
\small{IDSIA, Lugano, Switzerland.}\\
\small{ monaldo@idsia.ch}}
\begin{document}

\maketitle
\begin{abstract}
Chv\'{a}tal-Gomory (CG) cuts and the Bienstock-Zuckerberg hierarchy  capture useful linear programs that the standard bounded degree Sum-of-Squares ($\sos$) hierarchy  fails to capture.

In this paper we present a novel polynomial time $\sos$ hierarchy for 0/1 problems with a custom subspace of high degree polynomials (not the standard subspace of low-degree polynomials).
We show that the new $\sos$ hierarchy recovers the Bienstock-Zuckerberg hierarchy.
Our result implies a linear program that reproduces the Bienstock-Zuckerberg hierarchy as a polynomial-sized, efficiently  constructible extended formulation that satisfies all constant pitch inequalities. The construction is also very simple, and it is fully defined by giving the supporting polynomials.
Moreover, for a class of polytopes (e.g. set cover and packing problems), the resulting $\sos$ hierarchy optimizes in polynomial time over the polytope resulting from any constant rounds of CG-cuts, up to an arbitrarily small error in the solution value.

Arguably, this is the first example where different basis functions can be useful in \emph{asymmetric} situations to obtain a hierarchy of relaxations.
\end{abstract}

\section{Introduction}
The Lasserre/Sum-of-Squares ($\sos$) hierarchy \cite{Lasserre01,Nesterov2000,parrilo03,shor1987class} is a systematic procedure for constructing a sequence of increasingly tight semidefinite relaxations.
The $\sos$ hierarchy is parameterized by its \emph{level} (or \emph{degree}) $d$, such that the formulation gets tighter as $d$ increases, and a solution of accuracy $\eps>0$ can be found by solving a semidefinite program of size $(mn \log(1/\eps))^{O(d)}$, where $n$ is the number of variables and $m$ the number of constraints in the original problem. In this paper we consider 0/1 problems. In this setting, it is known that the hierarchy converges to the 0/1 polytope in $n$ levels and captures the convex relaxations used in the best available approximation algorithms for a wide variety of optimization problems
(see e.g. \cite{hierreading,blekherman2013semidefinite,Laurent03,Laurent2009} and the references therein).
%

In a recent paper Kurpisz, Lepp\"anen and the author~\cite{KurpiszLM17} characterize the set of 0/1 integer linear problems that still have an (arbitrarily large) integrality gap at level $n-1$. These problems are the ``hardest'' for the $\sos$ hierarchy in this sense. In another paper, the same authors~\cite{Kurpisz2017} consider a problem that is solvable in $O(n \log n)$ time and proved that the integrality gap of the $\sos$ hierarchy is unbounded at level $\Omega(\sqrt n)$ even after incorporating the objective function as a constraint (a classical trick that sometimes helps to improve the quality of the relaxation). All these $\sos$-hard instances are covering problems.

Chv\'{a}tal-Gomory (CG) rounding is a popular cut generating procedure that is often used in practice (see e.g.~\cite{Conforti:2014} and Section~\ref{sect:cg} for a short introduction).
There are several prominent examples of CG-cuts in polyhedral combinatorics,  including the odd-cycle inequalities of the stable set polytope, the blossom inequalities of the matching polytope, the simple M\"{o}bius ladder inequalities of the acyclic subdigraph polytope and the simple comb inequalities of the symmetric traveling salesman polytope, to name a few. Chv\'{a}tal-Gomory cuts capture useful and efficient linear programs that the bounded degree $\sos$ hierarchy fails to capture.
Indeed, the $\sos$-hard instances studied in \cite{KurpiszLM17} are the ``easiest'' for CG-cuts, in the sense that they are captured within the \emph{first} CG closure.
It is worth noting that it is NP-hard \cite{LetchfordPS11} to optimize a linear function over the first CG closure, an interesting contrast to lift-and-project hierarchies 
where one can optimize in polynomial time for any constant number of levels.\footnote{It has often been claimed in recent papers, that one can optimize over degree-$d$ $\sos$  via the Ellipsoid algorithm in $n^{O(d)}$ time.
In a recent work, O'Donnell~\cite{ODonnell17} observed that this often repeated claim is far from true. However,  this issue does not apply to most of the results published so far and to the applications of this paper. See also \cite{Mastrolilli19} for recent news.}

Interestingly, Bienstock and Zuckerberg \cite{BienstockZ06} proved that, in the case of set cover, one can separate over all CG-cuts to an arbitrary fixed precision in polynomial time. The result in \cite{BienstockZ06} is based on another result \cite{BienstockZ04} by the same authors, namely on a (positive semidefinite) lift-and-project operator (which we denote (BZ) herein) that is quite
different from the previously proposed operators.
This lift-and-project operator generates different variables for different relaxations. They showed that this flexibility can be very useful in attacking relaxations of some set cover problems.

These three methods, ($\sos$, CG, BZ), are to some extent incomparable, roughly meaning that there are instances where one succeeds while the other fails (see  \cite{AuT18} for a comparison between $\sos$ and BZ, the already cited \cite{KurpiszLM17} for ``easy'' cases for CG-cuts that are ``hard'' for $\sos$, and finally note that clique constraints are ``easy'' for $\sos$ but ``hard'' for CG-cuts~\cite{Pudlak97}, to name a few).

One can think of the standard Lasserre/$\sos$ hierarchy at level $d$ as optimizing an objective function over linear functionals that sends $n$-variate polynomials of degree at most $d$ (over $\R$) to real numbers.
The restriction to polynomials of degree $d$ is the standard way (as suggested in~\cite{Lasserre01,parrilo03} and used in most of the applications) to bound the complexity, implying a semidefinite program of size $n^{O(d)}$.
However, this is not strictly necessary for getting a polynomial time algorithm and it can be easily extended by considering more general subspaces having a ``small'' (i.e. polynomially bounded) set of basis functions (see e.g. Chapter 3 in \cite{blekherman2013semidefinite} and \cite{FawziSP15,Gatermann200495}). This is a less explored direction and it will play a key role in this paper. Indeed, the more general view of the $\sos$ approach has been used so far to exploit very symmetric situations (see e.g. \cite{FawziSP15,Gatermann200495,raymond2016symmetric}). For symmetric cases the use of a different basis functions has been proved to be very useful.

To the best of author's knowledge,
in this paper we give the first example where different basis functions can be useful in \emph{asymmetric} situations to obtain a hierarchy of relaxations. More precisely, we focus on 0/1 problems and show how to reframe the Bienstock-Zuckerberg hierarchy \cite{BienstockZ04} as an augmented version of the $\sos$ hierarchy that uses high degree polynomials (in Section~\ref{sect:sossetcov} we consider the set cover problem, that is the main known application of the BZ approach, and in Section \ref{sect:bz} we sketch the general framework that is based on the set cover case). 
The resulting high degree $\sos$ approach retains in one single unifying $\sos$ framework the best from the standard bounded degree $\sos$ hierarchy, incorporates the BZ approach and allows us to get, in polynomial time for any fixed $t\in \N$ and $\eps>0$, a solution that satisfies the $t$-th CG-closure and that is at most $\eps$-times worse than the optimal solution value
for both, set cover and packing problems (BZ guarantees this only for set cover problems).
Moreover, the proposed framework is very simple and, assuming a basic knowledge in $\sos$ machinery (see Section~\ref{sect:sosproofs}), it is fully defined by giving the supporting polynomials. This is in contrast to the Bienstock-Zuckerberg's hierarchy that requires an elaborate description \cite{BienstockZ04,zuckerberg2004set}. Finally, as observed in~\cite{AuT16} (see Propositions 25 and 26 in \cite{AuT16}), the performances of the Bienstock-Zuckerberg's hierarchy depend on the presence of redundant constraints.\footnote{I thank Levent Tun{\c{c}}el for pointing out his work to me~\cite{AuT16}.} The proposed approach removes these unwanted features.

We emphasize that one can also generalize the Sherali-Adams hierarchy/proof system in the same manner to obtain the covering results. We will give a detailed description of this in the following.
So the formulation that we are going to describe for the set cover problem is actually an explicit linear program, see Section~\ref{sect:BZ-LP},
that reproduces the Bienstock-Zuckerberg hierarchy as a polynomial-sized, efficiently constructible extended formulation that satisfies all constant pitch inequalities.

\paragraph{Paper Structure}
In order to make this article as self-contained as possible and accessible to non-expert readers, in Section~\ref{sect:sosproofs} we give a basic introduction to $\sos$-proofs/relaxations. However, we provide an introduction from a more general point of view, namely in terms of a generic subspace of polynomials. This is the ``non-standard'' flavour that will be advocated in this paper.

In Section~\ref{sect:simple_example} we consider a family of elementary Chv\'{a}tal-Gomory cuts that are ``hard'' for the standard Lasserre/$d$-$\sos$ relaxation. More precisely, for every $L$, we show that there exists $\eps>0$ such that the set $\{x\in[0,1]^n:\sum_{i=1}^{n}x_i\geq L+\eps\}$ has Lasserre rank at least $n-L$. On the other side, this can be easily fixed by using a different basis of high degree polynomials.

Our main application is given in Section~\ref{sect:sossetcov}, where we show that the $\sos$ framework
equipped with a suitably chosen polynomial-size spanning set of high degree polynomials, produces a relaxation, actually a compact linear program, for set cover problems for which all valid inequalities of a given, fixed pitch hold (Theorem~\ref{th:setcover}). The general BZ approach is discussed in Section~\ref{sect:bz}.

In Section~\ref{sect:cg}, we give the packing analog of Theorem~\ref{th:setcover}. In this case the standard $\sos$ hierarchy is sufficient. Moreover, we show that
the optimal value of maximizing a linear function over the $d$-th CG closure of a packing polytope (an NP-hard problem in general) can be approximated, to arbitrary precision and in polynomial time, by using the standard $\sos$ hierarchy.

Final remarks and future directions are given in Section~\ref{sect:conclusions}.
\ifshort
Due to page limit policy of 25 pages, the full version of the paper can be found in \cite{mas19}.
\else
\paragraph{Recent developments.}
Very recently Fiorini et al. \cite{FioriniHW17} claim a new approach to reproduce the Bienstock-Zuckerberg hierarchy. We remark that their framework is weaker than the one presented in this paper, meaning that does not generalize to packing problems (see Section~\ref{sect:cg}). Moreover, their proof is essentially based on similar arguments as used in this paper (formerly appeared in \cite{Mastrolilli17}). We give more details in the appendix.
\fi

\section{Sum of Squares Proofs and Relaxations}\label{sect:sosproofs}

In this section we give a brief introduction to $\sos$-proofs/relaxations. We refer to the monograph~\cite{Laurent2009} for an excellent in-depth overview.
We emphasize that there is no mathematical innovation in this section; all the details herein are basically known. However, instead of the ``standard'' $\sos$ description in terms of bounded degree monomials, we provide a definition as a function of a generic subspace of polynomials. This is used in the remainder of the paper.

We will use the following notation. Let  $\R[\x]:=\R[x_1, \dots , x_n]$ be the ring of polynomials over the reals in $n$ variables. Let $\R[x]_d$ denote the subspace of $\R[\x]$ of degree at most $d\in \N$. If $\Ss=\{s_1,\ldots,s_k\}$ is a set of polynomials in $\R[\x]$, then the \emph{span} of $\Ss$, denoted $\spn \Ss$, is the set of all linear combinations of the polynomials in $\Ss$, i.e. $\spn \Ss:=\{\sum_{i=1}^k c_i\cdot s_i:c_i\in\R \}$, and $\Ss$ is called the \emph{spanning set} of $\spn \Ss$.

The set $\F$ of feasible solutions of an optimization problem is usually described by a finite number of polynomial equations and/or inequalities. This is formalized by the following definition.
Let $\F\subset \R^n$ be defined as
\begin{align}\label{eq:F}
  \F&=\{x\in \R^n: f_i(x)=0 \ \forall i\in[\ell], g_j(x)\geq 0  \ \forall j\in[m]\},
\end{align}
where for each $i\in [\ell]$ and $j\in[m]$, $f_i(x),g_j(x)\in \R[x]$ and where $[\ell]$ denote $\{1,2,\ldots, \ell\}$. Here, $\F$ is called a \emph{basic closed semialgebraic set}.
For the sake of brevity, throughout this document, while referring to a {semialgebraic set}, we implicitly assume a \emph{basic closed} semialgebraic set.

One could write many other constraints that are equally valid on the set $\F$.
For example, we are able to produce further polynomials vanishing on the set $\F$ by considering linear combinations of $f_i(x)$ with polynomial coefficients.
The set of all polynomials generated this way is a polynomial ideal.

\begin{definition}\label{def:ideal}
The \emph{{ideal}} generated by a finite set $\{f_1,\ldots, f_\ell\}$ of polynomials in $\R[x]$ is defined as
$$\Ideal{ f_1,\ldots,f_\ell}:= \left\{\sum_{i=1}^\ell t_i \cdot f_i\ :\ t_1,\ldots,t_\ell\in \R[x]\right\}.$$
\end{definition}
A polynomial $p\in\R[x]$ is a \emph{sum of squares} ($\sos$) if it can be written as the sum of squares of some other polynomials.
If these last polynomials belong to a subspace $\spns\subseteq \R[x]$, for a given spanning set $\Ss\subseteq \R[\x]$, then we say that $p$ is $\Ss$-$\sos$.
\begin{definition}\label{def:polysos}
For $\Ss\subseteq \R[x]$, a polynomial $p\in\R[x]$ is $\Ss$-$\sos$ if $p\in \Sigma_\Ss$ where
$$\Sigma_\Ss:=\{p\in\R[\x]: p=\sum_{i=1}^{r} q_i^2, \text{ for some } r\in \N \text{ and } q_1,\ldots, q_r\in \spns\}.$$
\end{definition}
As for the vanishing polynomials on $\F$, we are able to produce further valid inequalities for set $\F$ by multiplying $g_j(x)$ against $\sos$ polynomials, or by taking conic combinations of valid constraints. This gives the notion of {quadratic module}.
\begin{definition}\label{def:qmodule}
For $\Ss\subseteq \R[x]$, the $\Ss$-\emph{quadratic module} generated by a finite set $\{g_1,\ldots, g_{m}\}$ of polynomials in $\R[x]$ is defined as
$$\cone_\Ss( g_1,\ldots,g_{m}):=\left\{s_0+ \sum_{i=1}^{m} s_i \cdot g_i: s_0,s_1,\ldots,s_m\in\Sigma_\Ss \right\}.$$
\end{definition}
Certifying that a polynomial $p\in \R[x]$ is non-negative over a semialgebraic set~$\F$ is an important problem in optimization, as certificates of non-negativity can often be leveraged into optimization algorithms.
For example let $p:=p'-\gamma$, where $p'\in \R[x]$ and $\gamma$ is a real number. If we can certify that $p$ is non-negative over $\F$ then the infimum of $p'$ is not smaller than~$\gamma$. We will elaborate more on this in Section~\ref{sect:dualsos}.
\begin{definition}
For $\Ss\subseteq \R[x]$ and $p(x)\in\R[x]$, a $\Ss$-$\sos$ certificate of non-negativity of $p(x)$ over $\F$ (see~\eqref{eq:F}) is given by a polynomial identity of the form
\begin{align}\label{eq:sos_certificate}
  p(x) &= f(x)+g(x),
\end{align}
for some $f(x)\in \Ideal{ f_1,\ldots,f_\ell}$  and $g(x)\in \cone_\Ss( g_1,\ldots,g_{m})$.
\end{definition}
Notice that for all $x\in \F$ the right-hand side of~\eqref{eq:sos_certificate} is manifestly non-negative, thereby certifying that $p(x)\geq 0$ over $\F$.

In the following, whenever $\Ss=\R[x]$, we drop $\Ss$ from the notation. So $\Sigma$, $\sos$ and $\cone( g_1,\ldots,g_{m})$ denote $\Sigma_{\R[x]}$, ${\R[x]}$-$\sos$ and $\cone_{\R[x]}( g_1,\ldots,g_{m})$, respectively.

A natural question arises: Can all valid constraints be generated this way? Unless further assumptions are made, the answer is negative (see, e.g.~\cite{blekherman2013semidefinite}).
However, for the applications of this paper, we are interested in the case $\F$ is the set of feasible solutions of a 0/1 integer linear program, with $n$ variables and $m$ linear constraints:
 \begin{align}\label{eq:semialgebraicset}
\FB&:=\{x\in \R^n: x_i^2-x_i=0  \ \forall i\in[n],\ g_j(x)\geq 0  \ \forall j\in[m]\},
\end{align}
where $x_i^2-x_i=0$ encodes $x_i\in\{0,1\}$ and each constraint $g_j(x)\geq0$ is linear. Under this assumption the answer of the above question is positive, as shown in the following.
(Actually, the linearity of the constraints is not necessary for this purpose.)
We review this derivation from a slightly different perspective,
by highlighting several aspects that will play a role in our proofs.

We start with some preliminaries.
%
The set of polynomials in $\R[x]$ that vanish on the Boolean hypercube $\Zz^n_2$ is the ideal $$\IB:=\Ideal{x_1^2-x_1,\ldots,x_n^2-x_n}.$$
\begin{definition}
Let $\I$ be an ideal, and let $f,g\in \R[\x]$. We say that $f$ and $g$ are \emph{congruent modulo $\I$}, written $f \equiv g \pmod \I$, if $f-g\in \I$.
\end{definition}

From the above definition, a $\Ss$-$\sos$ certificate of non-negativity of $p(x)$ over $\FB$ is given by a polynomial congruence of the form
\begin{align}\label{soscertificate01}
  p(x) &\equiv g(x) \pmod \IB,
\end{align}
for some $g(x)\in \cone_\Ss( g_1,\ldots,g_{m})$. For the sake of brevity, whenever we use ``$\equiv$'' we assume that the congruence is modulo $\IB$ (unless differently specified).

Let us introduce an indicator multilinear polynomial that will play an important role throughout this paper.
For $I\subseteq Z\subseteq [n]$, the \emph{Kronecker delta} polynomial is defined as:
\begin{equation}\label{eq:dirac}
\delta^Z_I:=\prod_{i\in I} x_i \prod_{j\in Z\setminus I} (1-x_j).
\end{equation}
If $Z=\emptyset$ we assume that $\delta^Z_I=1$.
Let $x^Z_I$ denote  the 0/1 (partial) assignment with $x_i=1$ for $i\in I$, and $x_j=0$ for $j\in Z\setminus I$.
Notice that $\delta^Z_I$ is an indicator polynomial that is 1 when
its variables get assigned values according to $x^Z_I$.
%
Moreover, the following identities hold:
\begin{align}
  &\sum_{I\subseteq Z} \delta^Z_I=1, \label{Kr1} \\
  &\left(\delta^Z_I\right)^2\equiv \delta^Z_I, \label{Kr2}\\
  &\delta^Z_I\delta^Z_J\equiv 0,\text{ for }I,J\subseteq Z\text{ with }I\not=J. \label{Kr3}
\end{align}
By using \eqref{Kr2} and \eqref{Kr3} we have (for $Z\subseteq [n]$ and $W\subseteq 2^Z$)
\begin{align}\label{Kr4}
  \left(\sum_{I\in W} \delta^Z_I\right)^2&\equiv \sum_{I\in  W} \delta^Z_I.
\end{align}
For any given $p(x)\in \R[x]$, let us use $p(x^Z_I)$ to denote $p(x)$ after the partial assignment defined by $x^Z_I$: for example if $p(x)=p_0+\sum_{i=1}^n p_i \cdot x_i$ then $p(x^Z_I)= p_0+\sum_{i\in I} p_i+\sum_{[n]\setminus Z}p_i\cdot x_i$. Then the following holds:
\begin{align}\label{Kr5}
 \delta^Z_Ip(x) & \equiv \delta^Z_Ip\left(x^Z_I\right).
\end{align}
These basic facts will be used several times.

\paragraph{$\sos$ Proofs Over the Boolean Hypercube}
For any given polynomial $p(x)\in \R[\x]$ that is non-negative over $\FB$, we are interested in certifying this property by exhibiting a $\sos$ certificate. 
With this aim, partition the Boolean hypercube into two sets $N^+:=\left\{I\subseteq [n]:p\left(x_I^{[n]}\right)\geq 0\right\}$ and $N^-:=\left\{I\subseteq [n]:p\left(x_I^{[n]}\right)< 0\right\}$. If $p(x)$ is non-negative over $\FB$, then for each $I\in N^-$ there exists a constraint that is violated on $x_I^{[n]}$, i.e. there is a mapping $h:N^-\rightarrow [m]$ such that $g_{h(I)}\left(x_I^{[n]}\right)<0$. To ease the notation, we drop the exponent ``$[n]$'' from $x_I^{[n]}$ and $\delta_I^{[n]}$.
Then:
\begin{align}\label{eq:interpol}
\begin{split}
&p(x)=\overbrace{\left(\sum_{I\subseteq [n]} \delta_I\right)}^{=1\text{ by \eqref{Kr1}}}p(x)\overset{\text{by \eqref{Kr5}}}{\equiv} \sum_{I\in N^+} \delta_I p(x_I)+\sum_{I\in N^-} \delta_I \frac{p(x_I)}{g_{h(I)}(x_I)}g_{h(I)}(x_I)
 \\
&\overset{\text{by \eqref{Kr4} and \eqref{Kr5}}}{\equiv} \underbrace{\left(\sum_{I\in N^+} \delta_I \sqrt{p(x_I)}\right)^2}_{s_0}+\sum_{I\in N^-} \underbrace{\left(\delta_I \sqrt{\frac{p(x_I)}{g_{h(I)}(x_I)}}\right)^2}_{s_{h(I)}}g_{h(I)}(x).  
\end{split}
\end{align}
It follows that any non-negative polynomial over $\FB$ admits a $\Ss$-$\sos$ certificate where $\Ss$ is the set $\{\delta_I:I\subseteq [n]\}$ of Kronecker delta multilinear polynomials.
The quotient ring $\R[\x]/\IB$ is the set of equivalence classes for congruence modulo~$\IB$. Polynomials from the quotient ring $\R[\x]/\IB$ are in bijection with square-free (also known as multilinear) polynomials in $\R[\x]$. We will use $\R[\x]/\IB$ to denote the subspace of multilinear polynomials. The aforementioned Kronecker delta polynomials form a basis for the subspace of multilinear polynomials $\R[\x]/\IB$.
The next proposition summarizes the above.
%
%
%
\begin{proposition}\label{th:sos}
Let $\spns=\R[\x]/\IB$.  If $p(x)\in \R[x]$ is non-negative over $\FB$ then it admits a  $\Ss$-$\sos$ certificate of the form
\begin{align}\label{eq:01sos_certificate}
  p(x) &\equiv g(x) \pmod \IB,
\end{align}
for some $g(x)\in \cone_\Ss( g_1,\ldots,g_{m})$.
\end{proposition}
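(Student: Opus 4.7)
The plan is to build an explicit $\Ss$-$\sos$ certificate by interpolating over the Boolean hypercube using the Kronecker delta basis. First I would partition $2^{[n]}$ into $N^+ := \{I \subseteq [n] : p(x_I) \geq 0\}$ and $N^- := \{I \subseteq [n] : p(x_I) < 0\}$. Since $p$ is nonnegative on $\FB$, every $I \in N^-$ witnesses some violated constraint, so I can fix a choice function $h : N^- \to [m]$ with $g_{h(I)}(x_I) < 0$.

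Next I would rewrite $p(x)$ as a sum indexed by the hypercube. Using the partition of unity \eqref{Kr1} and identity \eqref{Kr5}, I get $p(x) \equiv \sum_{I \subseteq [n]} \delta_I \, p(x_I) \pmod{\IB}$. The $N^+$ part is collapsed into a single square via the orthogonality identity \eqref{Kr4}: because the $\delta_I$ are pairwise orthogonal modulo $\IB$ and idempotent, $\bigl(\sum_{I \in N^+} \delta_I \sqrt{p(x_I)}\bigr)^2 \equiv \sum_{I \in N^+} \delta_I \, p(x_I)$, and the scalar square roots are legitimate precisely by the definition of $N^+$. For each $I \in N^-$, I rewrite the corresponding summand as $\delta_I \tfrac{p(x_I)}{g_{h(I)}(x_I)} g_{h(I)}(x_I)$, which is well defined since $g_{h(I)}(x_I) \neq 0$, and in which the scalar ratio $p(x_I)/g_{h(I)}(x_I)$ is nonnegative because both numerator and denominator are negative. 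Applying \eqref{Kr5} in reverse lets me replace $g_{h(I)}(x_I)$ by $g_{h(I)}(x)$, and applying \eqref{Kr4} again lets me write the $\delta_I$-weighted coefficient as a square.

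Finally, I would collect the pieces into the required form. Setting $s_0 := \bigl(\sum_{I \in N^+} \delta_I \sqrt{p(x_I)}\bigr)^2$ and, for each $j \in [m]$, $s_j := \bigl(\sum_{I \in N^-, \, h(I) = j} \delta_I \sqrt{p(x_I)/g_j(x_I)}\bigr)^2$, each $s_i$ lies in $\Sigma_\Ss$ because every $\delta_I$ is multilinear and so belongs to $\Ss = \R[x]/\IB$. Assembling the terms yields $p(x) \equiv s_0 + \sum_{j=1}^m s_j \, g_j(x) \pmod{\IB}$, which is exactly a polynomial in $\cone_\Ss(g_1, \ldots, g_m)$.

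The main obstacle is not conceptual but bookkeeping: one must carefully justify each congruence step using the Kronecker delta identities \eqref{Kr2}--\eqref{Kr5}, in particular the substitution rule $\delta_I \, q(x) \equiv \delta_I \, q(x_I)$, and confirm that every scalar square root introduced is taken of a nonnegative real. No Positivstellensatz is invoked: the idempotent, orthogonal behavior of the $\delta_I$ modulo $\IB$ already encodes enough structure to produce the certificate by direct interpolation, which is what makes the Boolean setting so much cleaner than the general semialgebraic case.
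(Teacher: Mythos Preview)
Your proposal is correct and follows essentially the same interpolation argument as the paper: partition $2^{[n]}$ into $N^+$ and $N^-$, pick a violated constraint $g_{h(I)}$ for each $I\in N^-$, expand $p$ against the partition of unity $\sum_I \delta_I=1$, and use \eqref{Kr4}--\eqref{Kr5} to assemble the square $s_0$ and the constraint-weighted squares. The only cosmetic difference is that you group the $N^-$ terms with a common $h(I)=j$ into a single square $s_j$, whereas the paper leaves them as a sum of individual squares $(\delta_I\sqrt{p(x_I)/g_{h(I)}(x_I)})^2$; both versions lie in $\cone_\Ss(g_1,\ldots,g_m)$ and the grouping works by the orthogonality \eqref{Kr3}.
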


The existence of a $\Ss$-$\sos$ certificate can be decided by solving a semidefinite programming (SDP) feasibility problem whose matrix dimension is bounded by $O(|\Ss|)$.  We refer to~\cite{blekherman2013semidefinite,FawziSP15} and \vaia{sect:qsosproofscomplexity} for details and an example.

If $\spns=\R[\x]/\IB$, then the SDP has exponential size. The ``standard'', namely the ``most used'' way to bound the complexity is to restrict the spanning set $\Ss$ of $\Ss$-$\sos$ certificates to be the standard monomial basis of constant degree $d=O(1)$.
This bounds the degrees of the polynomials in $\Ss$-$\sos$ certificates to be a constant, and a non-negativity certificate is computed by solving a semidefinite program of size $n^{O(d)}$. Clearly this restriction imposes severe limitations on the kind of proofs that can be obtained. This type of approach was first proposed by Shor~\cite{shor1987class}, and the idea was taken further by Parrilo~\cite{parrilo03} and Lasserre \cite{Lasserre01}.

However, this modus operandi with bounded degree monomials can be extended to other subspaces $\spns$ having ``small'' spanning sets $\Ss$, i.e. with $|\Ss|=n^{O(d)}$ for some $d=O(1)$. This is a less explored direction and it will play a key role in this paper.
%
%
%
%
\subsection{0/1 Optimization and $\sos$ Relaxations}\label{sect:dualsos}

As already remarked, a number $\gamma$ is a global lower bound of a polynomial $p(x)$  over $\FB$ if and only if $p(x)-\gamma$ is non-negative over $\FB$.
For 0/1 problems, without loss of generality, we can assume that $p(x)$ is in multilinear form and therefore $p(x)\in \R[x]_n$.
For $\spns \subseteq \R[\x]/\IB$,
a relaxation of the above optimization problem is obtained by computing the $\sup \gamma$ such that $p(x)-\gamma$ has a $\Ss$-$\sos$ certificate of nonnegativity:
\begin{align}\label{eq:primal}
\begin{split}
\sup_{\gamma} \{&\gamma: p(x)-\gamma\in \C_{\Ss}\},
\end{split}
\end{align}
where
\begin{align}\label{eq:primal_cone}
  \C_{\Ss}&:=\{q+r: q\in\cone_{\Ss}(\GG), r\in\IB\cap \R[n]_{2n}\}
\end{align}
is a set of $\Ss$-$\sos$ certificates.
Note that \eqref{eq:primal} is indeed an approximation, since it could be that $p(x)-\gamma$ is non-negative
for some $\gamma$, but the set $\Ss$ is ``too small'' so that a $\Ss$-$\sos$ certificate does not exist.
However, enlarging $\Ss$ increases the number of possible certificates and thus tightens the approximation.
For 0/1 semialgebraic sets and multilinear $p(x)$, we can always reduce to the case where the polynomials of $\sos$ certificates have degree at most $2n$, since for $\spns = \R[\x]/\IB$, the relaxation \eqref{eq:primal} is actually exact, as shown in~\eqref{eq:interpol}. This explains why we can restrict $r\in\IB\cap \R[n]_{2n}$ in \eqref{eq:primal_cone}.

\subsection{Duality and the Lasserre/$\sos$ Hierarchy}\label{sect:dual_lasserre}
The linear space of all real polynomials of $n$ variables and degree at most $d$ is isomorphic to the Euclidean space $\R^{n+d \choose{d}}$. Indeed, a simple combinatorial argument shows that any degree-$d$ polynomial $p(x)$ can have at most ${n+d \choose{d}}$ monomials, which we can order in some arbitrary way (ordered basis). Then, we can put the coefficients in a column vector $p$, in the selected order, and thus obtain a bijective mapping to $\R^{n+d \choose{d}}$. We will say that $p$ is the \emph{column vector representation} of $p(x)$ in the (ordered) \emph{standard monomial basis}. Then, for $\Ss \subseteq \R[\x]/\IB$, set $\C_{\Ss}$ (see \eqref{eq:primal_cone}) is (isomorphic to) a subset of $\R^{n+2n \choose{2n}}$, and it can be shown to form a \emph{cone} in the sense of convex geometry.

We emphasize that in the above arguments we have chosen the standard monomials as basis for the column vector representation of polynomials. It is clear that other bases are possible.
Actually, for our main application we will use a different basis. More generally, any linear space $V$ (of polynomials) is isomorphic to the space of column vectors of a certain dimension: choose an ordered basis $b^\top=(b_1,\ldots,b_k)$ for the linear space $V$ (of polynomials), the column vector representation of $p(x)\in V$ is a vector $p\in \R^k$ such that $p(x)=b^\top p$.

\paragraph{Dual Program}
Recall that in linear algebra, a \emph{linear functional} $y$ is a linear map from a linear space $V$ to its field $\Field$ of scalars. A linear functional $y$ is a \emph{linear} function:
\begin{align}\label{eq:func_linearity}
  y(\alpha\cdot v + \beta \cdot w)&=\alpha \cdot y(v) + \beta \cdot y(w)\quad \forall v,w\in V, \  \forall \alpha,\beta\in \Field.
\end{align}
In $\R^k$, for $k\in \N$, linear functionals are represented as vectors and their action on vectors is given by the inner product:
let $y,z\in \R^k$, the evaluation of $y$ at $z$ is denoted by the inner product $\dotp{z,y}$, that is $\dotp{z,y}=y(z)$.
Let $\C$ be a set in $\R^k$ equipped with an inner product $\dotp{z,y}=y(z)$. The \emph{dual} cone\footnote{Recall, in finite dimension, \emph{topological} and \emph{algebraic} duals are the same.} of $\C$ is defined by
\begin{align}\label{dual_cone}
  \C^*=\{y\in \R^k: y(z)\geq 0 \ \forall z\in \C \}.
\end{align}
In other words, the dual cone is the set of linear functionals that are non-negative on the primal cone.
Consider a standard conic program over a cone $\C$ and its dual:
\begin{align} \label{eq:p}
{\bf{Primal}}&:\sup_z \{\dotp{c,z}: p - Az \in \C\}; \quad
{\bf{Dual}}:\inf_y\{\dotp{p,y}: A^\top y = c;y\in \C^*\}.
\end{align}

To find the dual program of~\eqref{eq:primal} as a conic optimization problem, choose an (ordered) \emph{basis} for the polynomials in $\C_{\Ss}$ (we will say a little bit more about this later). The dimension of this basis defines the dimension of the linear functionals $y$: there is one entry in $y$ for each polynomial in the basis. Set $p$ in~\eqref{eq:p} to be the column vector representation of polynomial $p(x)$ in~\eqref{eq:primal} according to the chosen (ordered) basis.  Consider representing the variable $\gamma$ as the constant term of a polynomial $z(x)$. Let $z$ be the column vector representation of $z(x)$ and maximize its inner product with a suitably chosen vector $c$ so that $\dotp{c, z}=\gamma$.

 For the standard monomial basis choose $c=(1,0,\ldots,0)^\top$ and the matrix $A$ such that $A_{0,0}=1$ and $A_{i,j}=0$ elsewhere. 
So under this choice, we get as the Dual:
\begin{align}\label{eq:dual}
\begin{split}
 \inf_y\{\dotp{p,y}: y_0=1;\  y \in \C^*_{\Ss}\}.
\end{split}
\end{align}

The dual cone $\C^*_{\Ss}$ of $\C_{\Ss}$ turns out to have some nice properties, as explained below.
For any given polynomial $p(x)\in\R[x]$, we will use $\yy{p(x)}$ to denote $y(p)$ (or $\dotp{p,y}$), where $p$ is the column vector representation of $p(x)$ according to the chosen (ordered) basis, and $y$ is a linear functional.
With respect to any chosen vector basis for the polynomials from $\C_{\Ss}$,
the elements of the dual space $\C^*_{\Ss}$ define linear functionals $\yy{\cdot}$ (sometimes called \emph{pseudo-expectation} functionals and denoted with $\PEXP{\cdot}$) on polynomials that satisfy:
%

\begin{enumerate}
  \item (\textbf{Normalization}) $\yy{1}=1$\label{eq:d1};
  \item (\textbf{Linearity}) $\yy{\alpha \cdot p(x)+\beta \cdot q(x)}=\alpha \cdot \yy{p(x)}+\beta \cdot \yy{q(x)}$, for all $p(x),q(x) \in \C_{\Ss}$ and $\alpha,\beta\in \R$;\label{eq:linearity}
  \item (\textbf{Positivity}) $\yy{q(x)^2} \geq 0$, for all $q(x) \in {\spns}$;\label{eq:d2}
  \item (\textbf{Positivity}) $\yy{q(x)^2\cdot  g_i(x)} \geq 0$, for all $q(x) \in {\spns}$, for all $i\in[m]$;\label{eq:d3}

  \item (\textbf{Multilinearity}) $\yy{t(x)\cdot(x_i^2-x_i)} = 0$, for all $t(x)\in\R[x]$, for all $i\in[n]$.\label{eq:d4}
\end{enumerate}

Condition \eqref{eq:d1} says that the constant polynomial 1 is mapped to 1.
Note that in~\eqref{eq:dual}, $y_0=1$ comes directly from \eqref{eq:d1} (in the standard monomial basis we have $\yy{1}=\dotp{(1,0,\ldots,0)^\top,y}=y_0$).

Condition \eqref{eq:linearity} follows from the linearity of linear functionals (see \eqref{eq:func_linearity}).
Note that assigning arbitrary values to the entries of the linear functional $y$ guarantees linearity. Indeed, the entries of $y$ are linearly independent because they correspond to the ``linearization''
of the polynomials that form a basis for $\C_{\Ss}$, which are linearly independent. This is the only place where we need linear independence. Alternatively, we can choose a spanning set of polynomials for $\C_{\Ss}$ and impose the linearity condition~\eqref{eq:linearity}.

%

Conditions \eqref{eq:d2}, \eqref{eq:d3} and \eqref{eq:d4} follow from the definition of the dual cone (see \eqref{dual_cone}) of $\C_{\Ss}$ (see \eqref{eq:primal_cone}). Note that the multilinearity condition \eqref{eq:d4} can be easily enforced by restricting to multilinear polynomials: any given polynomial $p(x)$ will be replaced by its \emph{multilinear form}, denoted $\overline{p(x)}$, i.e. the normal form after polynomial division by the \GB basis $\{x_i^2-x_i:i\in [n]\}$.\footnote{The multilinear form of $p(x)$ is obtained by replacing every occurrence in $p(x)$ of ``$x_i^k$'' with ``$x_i$'', whenever $i\in[n]$ and $k\geq 2$; for example $x_1\cdot x_2 +2\cdot x_2$ is the multilinear form of $x_1^3\cdot x_2 +2\cdot x_2^2$.} So in conditions \eqref{eq:d2} and \eqref{eq:d3}, we replace $q(x)^2$ and $q(x)^2\cdot  g_i(x)$, with their multilinear forms $\overline{q(x)^2}$ and $\overline{q(x)^2\cdot  g_i(x)}$, respectively.
From now on, we will restrict to the subspace of multilinear polynomials $\R[\x]/\IB$. This allows us to enforce the multilinearity condition \eqref{eq:d4}.

By the above arguments, we can restrict to the polynomials from $\C_{\Ss}$ that are multilinear. These polynomials are spanned by the following set of multilinear polynomials $$T:=\{\overline{x_i\cdot p\cdot q}: p,q\in \Ss, i\in[n]\cup\{0\}\},$$ where $x_0:=1$, and recall that we are considering linear constraints $g_i(x)\geq 0$, for $i\in [m]$. So, the vector $y$ has one entry for each polynomial that belongs to a chosen basis for the span $\langle T \rangle$ of $T$.
By assuming this, we can
reformulate~\eqref{eq:dual}, with respect to a chosen basis for $\langle T \rangle$, as
\begin{align}
\inf\  & \yy{p} \label{eq:obj}\\
s.t.\  & \yy{1}=1;\label{eq:l1}\\
& \yy{\overline{q(x)^2}} \geq 0, \quad \forall q(x) \in {\spns}; \label{eq:psdcond}\\
& \yy{\overline{q(x)^2\cdot  g_i(x)}} \geq 0, \quad \forall q(x) \in {\spns}, \forall i\in[m].  \label{eq:2psdcond}
 \end{align}

The program \eqref{eq:obj}-\eqref{eq:2psdcond} is actually a \emph{semidefinite program} whose matrix dimension is bounded by $O(|\Ss|)$, that we call \emph{$\Ss$-$\sos$ relaxation}.
%
%
To see this, let $b^\top=(b_1,\ldots,b_k)$ be an (ordered) basis for $\spns$, for some $k\leq |\Ss|$. Then, consider any polynomial $q(x)\in \spns$, and let $q$ be its column vector representation according to the (ordered) basis $b^\top$, i.e. $q(x)=b^\top q$. Then $q(x)^2= \langle q q^\top, b b^\top \rangle$. Let $M(y)$ be a $|b|\times |b|$ square matrix indexed by the pairs $(b_i,b_j)\in b\times b$, such that the $(b_i,b_j)$-th entry of $M(y)$ is equal to $\yy{\overline{b_i b_j}}$.
Recall that $\yy{q(x)^2}$ is equal to $\langle y, p \rangle$, where $p$ is the column vector representation of $q(x)^2$.
By simple inspection note that $\yy{\overline{q(x)^2}}=\langle qq^\top, M(y)\rangle$. It follows that Condition~\eqref{eq:psdcond} is equivalent to impose $\langle qq^\top, M(y)\rangle\geq 0$ for all $q$, which is equivalent to require $M(y)$ to be positive semidefinite. A similar argument holds for Condition~\eqref{eq:2psdcond}.

\paragraph{Standard and Generalized $\sos$ Relaxations} When $\Ss$ is the standard (multilinear) monomial basis of degree $\leq d$, then the $\Ss$-$\sos$ relaxation~\eqref{eq:obj}-\eqref{eq:2psdcond} is the (standard) \emph{Lasserre/$\sos$-hierarchy} parameterized by the degree $d\in \N$, in short, denoted by $d$-$\sos$. $\Ss$-$\sos$ generalizes $d$-$\sos$ relaxations by working with a generic set $\Ss$ of polynomials. In this case, the aforementioned matrix $M(y)$ is the so-called (truncated) \emph{ moment matrix}.

Note that in standard $\sos$, set $T$ forms a basis for $\langle T \rangle$, and it is the set of all (multilinear) monomials of degree at most $2d+1$. The variables in $d$-$\sos$ are the entries of the linear functionals $y$, which correspond to the ``linearization'' of the polynomials from $T$.
\paragraph{Standard and Generalized Sherali-Adams Relaxations}
If $\Ss$ is again the standard monomial basis of degree $\leq d$ \emph{and} we further relax \eqref{eq:psdcond} and \eqref{eq:2psdcond} by
\begin{align}
& \yy{\overline{q(x)}} \geq 0, \quad \forall q(x) \in \Ss, \label{eq:SAcond1}\\
& \yy{\overline{q(x)\cdot g_i(x)}} \geq 0, \quad \forall q(x)\in \Ss, \forall i\in[m],  \label{eq:SAcond2}
 \end{align}
then we obtain the so called \emph{Sherali-Adams (SA) hierarchy} of relaxations, denoted $d$-SA, and defined by~\eqref{eq:obj}, \eqref{eq:l1}, \eqref{eq:SAcond1}, \eqref{eq:SAcond2}. This is again parameterized by $d$, but it is a \emph{linear program} (this follows from \eqref{eq:SAcond1}, \eqref{eq:SAcond2} and the definition of linear functionals where their action on vectors is given by the dot product) of size $O(|\Ss|)=n^{O(d)}$. Note that both hierarchies, $d$-$\sos$ and $d$-SA, have the same spanning set $\Ss$ of monomials, which are \emph{non-negative} over the Boolean hypercube.

In the definition of $d$-SA we restrict to work with polynomials from $\overline{q(x)\cdot g_i(x)}$ for $q(x)\in \Ss$, and $i\in[m]$. Let $$T_{SA}:=\{\overline{x_i\cdot p}: p\in \Ss, i\in[n]\cup\{0\}\}.$$ When $\Ss$ is the standard monomial basis then $T_{SA}$ is a basis for $\langle T_{SA} \rangle$, and it is the set of all (multilinear) monomials of degree at most $d+1$. It follows that the variables in $d$-SA are the entries of the linear functionals $y$, which correspond to the ``linearization'' of the polynomials from $T_{SA}$.

We generalize $d$-SA relaxations to work with a generic set $\Ss$ of polynomials (that is non-negative over the Boolean hypercube), and obtain $\Ss$-SA. The relaxation $\Ss$-SA is a linear program with $O(|\Ss|)$ linear constraints, which correspond to \eqref{eq:l1}, \eqref{eq:SAcond1} and \eqref{eq:SAcond2}.
%


%
 We conclude our overview on
 {$\sos$-relax-ations} by pointing out the following fact.

 \begin{proposition}\label{th:pdrel}
 If $p(x)$ admits a $\Ss$-$\sos$ certificate of non-negativity over $\FB$, then $\yy{p(x)}\geq 0$ holds for the corresponding $\Ss$-$\sos(\FB)$ relaxation~\eqref{eq:l1}-\eqref{eq:2psdcond}.
 \end{proposition}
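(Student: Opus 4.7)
The plan is to unpack the definition of an $\Ss$-$\sos$ certificate and push it through the linear functional $\yy{\cdot}$, using in order the linearity, multilinearity, and the two positivity conditions of the dual program.

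First I would write the hypothesis explicitly: by Definitions~\ref{def:polysos} and~\ref{def:qmodule}, a $\Ss$-$\sos$ certificate of non-negativity of $p(x)$ over $\FB$ has the form
\begin{equation*}
p(x) \;\equiv\; s_0(x) + \sum_{i=1}^{m} s_i(x)\,g_i(x) \pmod{\IB},
\end{equation*}
where each $s_i \in \Sigma_\Ss$, so that $s_i(x)=\sum_{j=1}^{r_i} q_{ij}(x)^2$ for some finite collection $\{q_{ij}\}\subset \spns$. Equivalently, the difference between the two sides lies in $\IB$, that is, it is a polynomial combination of the multilinearity polynomials $x_\ell^2-x_\ell$.

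Next I would apply the linear functional $\yy{\cdot}$ to both sides. The multilinearity condition \eqref{eq:d4} guarantees that $\yy{\cdot}$ vanishes on $\IB$, so congruence modulo $\IB$ is preserved under $\yy{\cdot}$; equivalently, passing to multilinear forms does not change the value, since $\overline{p(x)} = p(x) - r(x)$ with $r(x)\in\IB$. Then linearity~\eqref{eq:linearity} gives
\begin{equation*}
\yy{p(x)} \;=\; \yy{\overline{s_0(x)}} \,+\, \sum_{i=1}^{m} \yy{\overline{s_i(x)\,g_i(x)}} \;=\; \sum_{j=1}^{r_0} \yy{\overline{q_{0j}(x)^2}} \,+\, \sum_{i=1}^{m}\sum_{j=1}^{r_i} \yy{\overline{q_{ij}(x)^2\,g_i(x)}}.
\end{equation*}

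Finally I would invoke the two positivity conditions of the $\Ss$-$\sos$ relaxation: condition~\eqref{eq:psdcond} applied to each $q_{0j}\in\spns$ yields $\yy{\overline{q_{0j}(x)^2}}\ge 0$, and condition~\eqref{eq:2psdcond} applied to each $q_{ij}\in\spns$ and each $i\in[m]$ yields $\yy{\overline{q_{ij}(x)^2 g_i(x)}}\ge 0$. Summing these non-negative contributions gives $\yy{p(x)}\ge 0$, as desired. There is no real obstacle here: the statement is essentially the ``weak duality'' between the primal cone $\C_\Ss$ and its dual $\C_\Ss^*$ used to derive \eqref{eq:obj}--\eqref{eq:2psdcond}, and the only point that requires a moment of care is justifying that the congruence modulo $\IB$ is respected by $\yy{\cdot}$, which is exactly the role of the multilinearity axiom \eqref{eq:d4}.
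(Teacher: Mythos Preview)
Your proposal is correct and follows essentially the same approach as the paper's proof: write the certificate as an element of $\IB$ plus an element of $\cone_\Ss(\GG)$, use multilinearity and linearity of $\yy{\cdot}$ to kill the ideal part and split the remaining sum, then invoke \eqref{eq:psdcond} and \eqref{eq:2psdcond}. If anything, your write-up is slightly more explicit than the paper's, since you expand each $s_i\in\Sigma_\Ss$ into its constituent squares $q_{ij}^2$ before applying the positivity conditions, whereas the paper applies them directly to the $s_i$'s.
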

 \begin{proof}
   By assumption, for some $f(x)\in \IB$ and $g(x)\in \cone_{\Ss}(\GG)$, we have $p(x)=f(x)+g(x)$. Then, $\yy{p(x)}= \yy{f(x)]+y[g(x)}=0+\yy{s_0}+ \sum_{i=1}^{m} \yy{s_i \cdot g_i}$ for some $s_0,s_1,\ldots,s_m\in\Sigma_\Ss$. By \eqref{eq:psdcond} and \eqref{eq:2psdcond}, each addend of the sum is non-negative and we have $\yy{p(x)}\geq 0$.
 \end{proof}

 By Proposition~\ref{th:pdrel}, if $p(x):=\sum_i a_i x_i -a_0\geq 0$ is a valid linear inequality for all $x\in\FB$ that admits a $\Ss\text{-}\sos$ certificate, then $\yy{p(x)}=\sum_i a_i \yy{x_i} -a_0\geq 0$. Note that $\{\yy{x_1},\ldots,\yy{x_n}\}$ is the solution $y$ of~\eqref{eq:l1}-\eqref{eq:2psdcond} projected to the original space of the variables. So, the (projected) solution of the $\Ss$-$\sos$ relaxation~\eqref{eq:l1}-\eqref{eq:2psdcond} satisfies $p(x)\geq 0$.
This implies the following informal ``recipe'' that  we will follow in the remainder of the paper. (Similar arguments hold for $\Ss$-SA.)
\medskip

\begin{tcolorbox}[colframe=white]\label{recipe}
\textbf{Recipe}: Assume that we are looking for a ``small'' relaxation for $\FB$ that satisfies a potentially ``large'' set of linear constraints $Ax\geq b$ that are valid for all $x\in \FB$. With this aim, search for a ``small'' spanning set $\Ss\subseteq \R[x]$ (if one exists) such that $Ax- b$ admits a $\Ss\text{-}\sos$ certificate. If we succeed, then the corresponding $\Ss$-$\sos$ relaxation~\eqref{eq:l1}-\eqref{eq:2psdcond} satisfies our goal.
\end{tcolorbox}

\section{A Simple Chv\'{a}tal-Gomory Cut That is Hard for $d$-$\sos$}\label{sect:simple_example}

For illustrative purposes, in this section we consider a simple example where the standard Lasserre/$d$-$\sos$ relaxation provably fails for ``large'' $d$. However, this can be easily fixed by using $\Ss$-$\sos$ with a ``small'' spanning set $\Ss$ of high degree polynomials.

The example is motivated by the following situation.
Consider the rational polyhedra $P=\{x\in \R^n: Ax\geq b\}$ with $A\in \Zz^{m\times n}$ and $b\in \Zz^{m}$.
Inequalities of the form $(\lambda^\top A)x\geq \lceil \lambda^\top b\rceil$, with $\lambda\in \R_+^m$, $\lambda^\top A\in \Zz^n$, and $\lambda^\top b\not\in \Zz$ are commonly referred to Chv\'{a}tal-Gomory (CG) cuts (further information on CG-cuts are provided in Section~\ref{sect:cg}).
It is a natural question to study how many levels (or degree $d$) of the {``standard''} Sum-of-Squares hierarchy, i.e. $d$-$\sos$, 
 are necessary to strengthening $(\lambda^\top A)x\geq  \lambda^\top b$ to get $(\lambda^\top A)x\geq \lceil \lambda^\top b\rceil$.
With this aim, consider the following semialgebraic set:
\begin{align}\label{symknapineq}
\FB=\{x\in \R^n: x_i^2-x_i=0  \ \forall i\in [n], \sum_{i=1}^n x_i\geq b\},
\end{align}
where $b\in \mathbb{Q}_+$ is intended to be a positive fractional number. Obviously, any feasible integral solution satisfies $\sum_{i=1}^n x_i\geq \lceil b \rceil$, and this is promptly captured by the first CG closure.

The following Theorem~\ref{th:maxsymconstr} (the proof can be found in \vaia{sect:toyd})  shows that regardless of whether $b$ is ``small'', i.e. $b=O(1)$, or ``large'', i.e. $b=\Omega(n)$,
$d$-$\sos(\FB)$ fails to enforce the simple CG-cut when $d= o(n)$.
%
\begin{theorem}\label{th:maxsymconstr}
Let $\FB$ be defined as in~\eqref{symknapineq}, with $P$ sufficiently large (that depends on $n$), $L\in\left\{0,1,\dots, \left\lceil\frac{n}{2}\right\rceil-1\right\}$  and $b:=L+1/P$.
Then, the $d$-$\sos(\FB)$-relaxation requires $d\geq n-L$ for enforcing $\sum_{i=1}^n x_i\geq \lceil b \rceil$.
\end{theorem}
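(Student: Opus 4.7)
The goal is to exhibit a feasible symmetric pseudo-expectation $\y$ for the $d$-$\sos(\FB)$ relaxation at $d = n-L-1$ whose value satisfies $\yy{\sum_i x_i} < L+1$; this rules out any degree-$d$ $\sos$ certificate of the Chv\'atal--Gomory cut $\sum_i x_i \geq L+1$ (by Proposition~\ref{th:pdrel} applied to $p(x) = \sum_i x_i - (L+1)$). Because both $\FB$ and the target inequality are $S_n$-invariant, a standard symmetrization argument (averaging $y$ under the coordinate-permutation action of $S_n$) lets me restrict attention to \emph{symmetric} pseudo-expectations, which are fully described by a univariate moment sequence $f:\{0,\ldots,n\}\to\R$ via $\yy{x_I}=f(|I|)$.

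Under this symmetric ansatz, the level-$d$ moment matrix $M(y)$ and the localizing matrix associated with $g(x) = \sum_i x_i - L - 1/P$ are both $S_n$-equivariant and admit the Terwilliger block-diagonalization on the Boolean hypercube: each splits into a small family of blocks whose entries are explicit linear combinations of $f(0), f(1), \ldots, f(2d+1)$. The positivity constraints defining $d$-$\sos(\FB)$ therefore reduce to a finite list of scalar PSD conditions on these blocks, and the proof reduces to constructing $f$ such that (i) $f(0)=1$, (ii) all Terwilliger blocks of $M$ and $M_g$ are PSD, and (iii) $nf(1) < L+1$.

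I would carry out the construction in two stages. First, pick a base sequence $f_0$ that is the moment sequence of an honest convex combination of uniform distributions on $\binom{[n]}{\ell}$ for $\ell \in \{L+1, L+2, \ldots, n\}$: this choice automatically lies in the relaxation (both $M(y_0)\succeq 0$ and $M_g(y_0)\succeq 0$, the latter because $g\geq 1-1/P>0$ on the support), but it gives $nf_0(1)\geq L+1$, so it is merely a feasible starting point. Second, move along a feasible direction $\Delta f$ that strictly decreases $f(1)$ while keeping all Terwilliger blocks PSD. Such a direction exists by a dimension count: the cone of level-$(n-L-1)$ symmetric pseudo-expectations has $n$ free parameters $f(1),\ldots,f(n)$, and the hypothesis $L<\lceil n/2\rceil$ guarantees that the block constraints do not fully pin down the coordinate $f(1)$ once the higher moments are allowed to move in a linked manner (concretely, one can trade an increase in $f(n)$ or $f(n-1)$ against a decrease in $f(1)$ without violating any block). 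Take $y := y_0 + \varepsilon \Delta f$ for $\varepsilon > 0$ small.

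The main obstacle is ensuring that both the moment matrix and the \emph{slack} matrix $M_g(y)$ remain PSD after the perturbation, not just one of them. This is exactly where the freedom to choose $P$ large (depending on $n$) enters the statement: for $P$ sufficiently large compared to $n$, the base point $y_0$ satisfies $M_g(y_0)\succ 0$ with a quantitative eigenvalue margin (because $g$ is strictly positive by a constant amount on the support of $y_0$), so a sufficiently small $\varepsilon$ preserves both $M(y)\succeq 0$ and $M_g(y)\succeq 0$ simultaneously. A careful accounting of the spectral margin of each Terwilliger block combined with the dimension count above yields the quantitative bound $d\geq n-L$ claimed in the theorem.
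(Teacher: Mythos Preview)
Your symmetric-reduction framework is the right starting point and matches the paper's setup, but the perturbation argument has a genuine gap: the ``dimension count'' is not a proof. You assert that at level $d=n-L-1$ one can trade higher moments against $f(1)$ without violating any block, but you give no mechanism tying the threshold $n-L-1$ to the feasibility of such a direction. A naive parameter count cannot distinguish $d=n-L-1$ from $d=n$ (where the relaxation is exact and no such direction exists); something specific to the degree must enter, and in your write-up it never does.

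The paper's proof takes a different route. It does not perturb from an integral-feasible base point. Instead it directly posits a symmetric solution with \emph{positive} mass at level $k=L-1$ (so already violating the CG cut) and mass at all $k\ge L-1$, then verifies both PSD conditions via a univariate root-counting criterion (their Theorem~\ref{thm:PSD_as_polynomial}, imported from~\cite{KLMipco16}). The criterion reduces PSDness to showing $\sum_k \binom{n}{k} w_k\, G_h(k)\ge 0$ for every polynomial $G_h\in\R[k]_{2t}$ with prescribed zeros at $\{0,\dots,h-1\}\cup\{n-h+1,\dots,n\}$ and nonnegativity on $[h-1,n-h+1]$. The only negative summand is at $k=L-1$, with weight $-y_{L-1}/P$. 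A counting argument then shows that when $t\le n-L$, such a $G_h$ has at most $t-h$ distinct roots in the open interval, which is strictly fewer than the $n-h-L+1$ indices $k\in\{L,\dots,n-h\}$ carrying strictly positive weight; hence some $G_h(k^*)>0$ survives. Bounding $G_h(L-1)/G_h(k^*)$ by $(1+2j^*)^{2t}$ and choosing $P$ large enough (depending on $n$) makes the lone negative term dominated.

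This also corrects your reading of the role of $P$: it is not there to give $M_g(y_0)$ a spectral margin at an integral base point (that margin is already $\ge 1-1/P$ for any $P>1$), but to damp the negative contribution coming from the mass deliberately placed at the infeasible level $k=L-1$. Without that mass there is nothing to prove; with it, largeness of $P$ is exactly what buys PSDness of the localizing matrix.
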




We remark that Grigoriev, Hirsch, and Pasechnik gave in~\cite{GrigorievHP_MMJ02} a very interesting and influential result that is related to our Theorem~\ref{th:maxsymconstr}, but \emph{significatively different} in terms of both, lower bounds and techniques. We defer the interested reader to Section~\ref{sect:grigoriev} for a discussion on this point, and for a more precise meaning of ``significatively different''.

The result in Theorem~\ref{th:maxsymconstr} is disappointing for at least two reasons: the considered CG-cut looks pathetically trivial, and the proof that $d$-$\sos(\FB)$ fails for small $d$ is relatively complicated (see \vaia{sect:toyd}).

On the other side, it would be sufficient to have in the ``bag'' $\spns$ the set of symmetric polynomials, i.e. polynomials which do not change under permutations of the variables, to promptly enforce this CG-cut within $\Ss$-$\sos(\FB)$.
%
The proof is basically the same as the one given in \eqref{eq:interpol}: 
{
\begin{align*}
&\sum_{i=1}^n x_i- \lceil b \rceil=
\overbrace{\left(\sum_{i=0}^n \sum_{I\subseteq [n]:|I|=i} \delta_I\right)}^{=1} \left(\sum_{i=1}^n x_i- \lceil b \rceil\right)
\overset{\text{\eqref{Kr5}}}{\equiv}
\sum_{i=0}^n  \overbrace{\left(\sum_{I\subseteq [n]:|I|=i} \delta_I\right)}^{\text{symmetric}}(i- \lceil b \rceil)\\
&\overset{\text{\eqref{Kr4}}}{\equiv}
\underbrace{\sum_{i=\lceil b \rceil}^n
\left(
\overbrace{
\sum_{I\subseteq [n]:|I|=i} \delta_I
}^{\text{symmetric}}\sqrt{i- \lceil b \rceil}\right)^2}_{s_0(x)} +
\underbrace{
\sum_{i=0}^{\lceil b \rceil-1}
\left(
\overbrace{
\sum_{I\subseteq [n]:|I|=i} \delta_I
}^{\text{symmetric}}
\sqrt{\frac{i- \lceil b \rceil }{i-b}}
\right)^2}_{s_1(x)}
\underbrace{\left(\sum_{i=1}^n x_i- b \right)}_{g_1(x)}.
\end{align*}
}
Note that $s_0(x)$ and $s_1(x)$ are sum of squares of \emph{symmetric} polynomials.
It is a well-known fact that every symmetric polynomial can be written uniquely as a polynomial
in the $n+1$ elementary symmetric polynomials (see e.g. \cite{Sturmfels:2008}). Therefore, it is sufficient to define $\Ss$ as the set of elementary symmetric polynomials to guarantee that $\sum_{i=1}^n x_i- \lceil b \rceil$ admits a $\Ss$-$\sos$ certificate.
We refer to \cite{FawziSP15,Gatermann200495,raymond2016symmetric} for other more interesting symmetric situations.
%

We emphasize that in this paper we show how to handle some \emph{asymmetric} situations by exploiting the problem structure, which is our main result.
\subsection{On a Related Result by Grigoriev, Hirsch, and Pasechnik}\label{sect:grigoriev}

Grigoriev, Hirsch, and Pasechnik (see Theorem 8.1 in~\cite{GrigorievHP_MMJ02}) gave a result related to Theorem~\ref{th:maxsymconstr}, but also significatively different as explained in this section. 
In~\cite{GrigorievHP_MMJ02}, the \emph{symmetric knapsack} is defined as follows:
\begin{align}\label{symknap}
\FB'=\{x\in \R^n: x_i^2-x_i=0  \ \forall i\in[n], \sum_{i=1}^n x_i= b\}.
\end{align}
Note that $\FB'$ is a more constrained version of the set $\FB$ defined in~\eqref{symknapineq}.

The Positivstellensatz Calculus~\cite{GrigorievHP_MMJ02} is a proof system for languages consisting of unsolvable systems of polynomial equations. Note that \eqref{symknap} is unsolvable when $b$ is a non-integral value. A degree $d$ infeasibility certificate consists of a set of degree $d$ polynomials, say $\{h_1\ldots,h_l\}$, and a derivation of $\sum_j h_j^2=-1$ from $\FB'$.
Let $\delta$ denote the step function defined as follows:
\begin{align*}
  \delta(x) =
  \begin{cases}
  2, & \mbox{if } x\not\in [0,n]; \\
  2k + 4, & \mbox{if } x\in [k, k + 1]\cup [n - k - 1, n - k] \mbox{ for all integers }0 \leq k < n/2.
  \end{cases}
\end{align*}
In~\cite{GrigorievHP_MMJ02} the following result is proved.
\begin{theorem}\cite{GrigorievHP_MMJ02}\label{th:grig} Any Positivstellensatz calculus refutation of the symmetric knapsack problem $\FB'$ (see~\eqref{symknap}) has degree $\min\{\delta(b), \left\lceil(n - 1)/2\right\rceil + 1\}$.
\end{theorem}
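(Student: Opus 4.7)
The plan is to prove the lower bound by duality: a degree-$d$ Positivstellensatz calculus refutation of $\FB'$ exists if and only if a certain semidefinite feasibility program has no solution, and by standard SDP/LP duality this is equivalent to the existence of a symmetric pseudo-expectation $E$ on polynomials of degree $\le 2d$ satisfying $E[1]=1$, $E[(x_i^2-x_i)\cdot t(x)]=0$ for all $t$ of appropriate degree, $E[(\sum_i x_i-b)\cdot p(x)]=0$ for all $p$ of degree $\le 2d-1$, and $E[q(x)^2]\ge 0$ for all $q$ of degree $\le d$. The entire proof reduces to constructing such an $E$ for every $d$ strictly below $\min\{\delta(b),\lceil(n-1)/2\rceil+1\}$; once $E$ exists, applying it to a purported refutation $\sum_j h_j^2 = -1$ in the quotient ring gives $0\le\sum_j E[h_j^2]=E[-1]=-1$, a contradiction.

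The second step is to exploit the $S_n$-symmetry of the constraints $\{x_i^2=x_i,\sum_i x_i=b\}$. Averaging any candidate $E$ over $S_n$ yields an $S_n$-invariant pseudo-expectation with the same properties, so I may restrict to symmetric $E$. Such an $E$ is fully determined by its values on the elementary symmetric functions, and equivalently by a univariate weight vector $\mu=(\mu_0,\ldots,\mu_n)$ on $\{0,1,\ldots,n\}$, where $\mu_k=E\!\left[\sum_{|I|=k}\delta^{[n]}_I\right]$ records the ``mass'' assigned to Hamming weight $k$. Under this dictionary, normalization becomes $\sum_k\mu_k=1$ and the knapsack equation translates to the univariate moment equations
\begin{equation*}
\sum_{k=0}^{n}\mu_k\,(k-b)\,k^{j}=0,\qquad j=0,1,\ldots,2d-1.
\end{equation*}

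The third step, which I expect to be the main technical obstacle, is to translate the positivity condition $E[q(x)^2]\ge 0$ for all multivariate $q$ of degree $\le d$ into a positivity condition on $\mu$. The key tool is the isotypic decomposition of the space of degree-$\le d$ multilinear polynomials under the $S_n$-action: it splits into finitely many irreducible components indexed by partitions of width $\le d$, and for a symmetric $E$ the PSD condition decouples across components, reducing to the positive semidefiniteness of a family of Hankel-type matrices built from $\mu$ against a basis of discrete orthogonal polynomials (the Hahn/Krawtchouk family on $\{0,\ldots,n\}$). The upper cap $\lceil(n-1)/2\rceil+1$ in the bound arises here, because the number of available isotypic components is controlled by $n/2$: beyond that depth the PSD constraints become self-dual and force $\mu$ to concentrate on integer points, ruling out any $\mu$ consistent with the non-integer value $b$.

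The final step is an explicit construction of $\mu$ in the regime $d<\delta(b)/2$. For $b\in(L,L+1)$ with $L<n/2$, I would choose $\mu$ supported on a carefully selected set of $2L+4$ consecutive integers around $b$ and solve the linear system of moment equations above together with positivity on the relevant truncated Hankel matrices; the number of free parameters matches the number of constraints precisely at the threshold $2d=\delta(b)=2L+4$, and Chebyshev-like interpolation on the integer lattice provides a feasible $\mu$ strictly below this threshold and shows infeasibility at or above it. By symmetry of the step function $\delta$ under $b\mapsto n-b$, the symmetric case $b\in(n-L-1,n-L)$ is handled identically. Combining the constructed symmetric pseudo-expectation with the duality argument of the first paragraph yields the claimed lower bound $\min\{\delta(b),\lceil(n-1)/2\rceil+1\}$.
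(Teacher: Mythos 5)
First, a point of reference: the paper does not prove Theorem~\ref{th:grig} at all --- it is quoted verbatim from \cite{GrigorievHP_MMJ02} --- so there is no in-paper proof to compare yours against; your proposal has to stand on its own as a reconstruction of the Grigoriev--Hirsch--Pasechnik argument. Your overall route (dualize, symmetrize over $S_n$, reduce to a univariate weight vector $\mu$ on $\{0,\dots,n\}$, and analyze moment/positivity conditions) is the standard modern way to prove \emph{static} SoS lower bounds for symmetric knapsack, but it has a genuine gap for the theorem as stated. The Positivstellensatz \emph{Calculus} is a dynamic proof system: a degree-$d$ refutation may derive, via repeated multiplication by variables and linear combination with all intermediate lines kept in degree $\le d$, polynomials that are \emph{not} in the static span $\{\sum_i t_i\cdot(x_i^2-x_i)+p\cdot(\sum_i x_i-b)\}$ truncated at degree $2d$, because cancellations at intermediate steps can hide high-degree static representations. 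Your pseudo-expectation only annihilates the static span, so applying it to a dynamic refutation is not justified. Note that one cannot simply require $E$ to vanish on the full ideal truncated in degree: the system is infeasible, so $1$ lies in the ideal and such an $E$ cannot exist. Characterizing exactly which polynomials are derivable in low degree is the heart of the Impagliazzo--Pudl\'ak--Sgall polynomial-calculus lower bound that Grigoriev et al.\ build on, and your proposal assumes this away entirely.

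Beyond that structural issue, the steps carrying the actual mathematical content are asserted rather than proved: ``the PSD condition decouples across isotypic components into Hankel-type matrices in the Hahn/Krawtchouk basis,'' ``beyond depth $n/2$ the constraints force $\mu$ to concentrate on integer points,'' and ``Chebyshev-like interpolation provides a feasible $\mu$ strictly below the threshold'' together \emph{are} the theorem; none is substantiated. The degree bookkeeping is also internally inconsistent: paragraph one promises a pseudo-expectation for every $d<\min\{\delta(b),\lceil(n-1)/2\rceil+1\}$, while the construction in the last paragraph is only claimed for $2d<\delta(b)$, so even granting every assertion you would obtain roughly half the claimed bound unless you fix a degree convention (degree of the refutation $=\deg(h_j^2)$ versus $\deg(h_j)$) and carry it through consistently. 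Finally, the cited statement is an exact degree characterization, and you never address the matching upper bound, i.e., the construction of a refutation of degree $\min\{\delta(b),\lceil(n-1)/2\rceil+1\}$.
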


Notice that any Positivstellensatz Calculus lower bound for the more constrained set $\FB'$ gives a $\sos$ lower bound for the set $\FB$ defined in~\eqref{symknapineq}.
However, for $b< n/2$, the bounds given by Theorem~\ref{th:grig} (see~\cite{GrigorievHP_MMJ02}), when applied to set $\FB$, are weaker, and also considerably weaker than the ones provided by our Theorem~\ref{th:maxsymconstr}. For example, for any given constant $k$ and $b\in (k,k+1)$, the degree lower bound in Theorem~\ref{th:grig} is $2k+4=O(1)$, whereas by Theorem~\ref{th:maxsymconstr} the degree lower bound is $n-k$.

Regarding the technique, Theorem~\ref{th:maxsymconstr} is proved by building on a result
given in~\cite{KLMipco16}. The latter has been shown to be very powerful in several other situations (see~\cite{KLMipco16,KLMicalp16} for more examples).


Finally, we observe that the study of the number of levels necessary to strengthen \emph{inequalities},
as in Theorem~\ref{th:maxsymconstr}, is useful for analyzing the $\sos$ ability to strengthen convex combinations of valid covering inequalities, as explained at the beginning of Section~\ref{sect:simple_example}. Analyzing \emph{equalities}, like in \eqref{symknap}, is less appropriate for these purposes.

\section{$\sos$ Derivation of Pitch Inequalities for set cover}\label{sect:sossetcov}
In this section we consider set cover problems.
For a given $m\times n$ matrix $A$ with 0/1 entries, the feasible region $\F_A$ for the \textsc{Set Cover} problem is defined by:
\begin{align}\label{setcov}
\F_A=\{x\in \R^n: x_i^2-x_i=0\ \forall i\in[n], Ax\geq e\},
\end{align}
where $e$ is the vector of 1s.
We focus on the concept of \emph{pitch} introduced in~\cite{BienstockZ04,zuckerberg2004set}.
\begin{definition}\label{def:pitch}
  For any given inequality $a^\top x- a_0\geq 0$, with indices ordered so that $0<a_1\leq a_2\leq \dots \leq a_h$ and $a_j=0$ for $j>h$, its \emph{pitch} $\pi(a,a_0)$ is the minimum integer such that $\sum_{i=1}^{\pi(a,a_0)} a_i - a_0 \geq 0$.
\end{definition}

We start emphasizing that valid inequalities for $\F_A$ of pitch at most $\pi$ are ``hard'' to enforce within ``standard'' hierarchies of relaxations, and this happens already with the first non-trivial pitch value, namely $\pi=2$ as shown by the following example.
\begin{example}\label{ex:fc1}
  Consider a set cover instance defined by a full-circulant constraint matrix $\FC$ as follows:
\begin{align}\label{eq:fullcirculant}
\F_{\FC}=\{x\in \R^n: x_i^2-x_i=0 \ \forall i\in[n], \sum_{j\in [n]\setminus\{i\}}x_j\geq 1 \ \forall i\in[n]\}.
\end{align}
%
Observe that $\sum_{j=1}^n x_j\geq 2$ is a pitch 2 valid inequality for the feasible region of this set cover instance. However, to enforce this inequality we need $n-3$ levels for a lifting operator stronger than the Sherali-Adams hierarchy \cite{BienstockZ04}, and requires at least $d=\Omega(\log^{1-\eps}n)$~\cite{KLMipco16}, with $\eps>0$ arbitrarily small, for the standard $d$-$\sos$ hierarchy (conjectured to be $n/4$ in \cite{BienstockZ04}).

This instance will be used in the following to exemplify our approach (see examples~\ref{ex:proof} and \ref{ex:lp}).
\end{example}

Vice versa, we show that there is a $\sap$-$\sos$ relaxation, where $\sap$ is a set of high degree polynomials of polynomial size, that satisfies all valid inequalities of constant pitch $\pi=O(1)$.
\begin{theorem}\label{th:setcover}
Consider a set cover problem given by a matrix $A$, and let $\pi=O(1)$ be a fixed non-negative integer. There is a polynomial-size $\sap$-$\sos$ relaxation that satisfies all valid inequalities for $\F_A$ of pitch at most $\pi$.
\end{theorem}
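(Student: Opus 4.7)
The plan is to apply the ``Recipe'' from Section~\ref{sect:sosproofs}: exhibit a polynomial-size spanning set $\sap\subseteq\R[x]$ such that every valid pitch-$\leq\pi$ inequality for $\F_A$ admits a $\sap$-$\sos$ certificate of non-negativity. Proposition~\ref{th:pdrel} will then deliver the relaxation. I would take $\sap$ to consist of the low-degree monomials $\prod_{i\in J}x_i$ for $J\subseteq[n]$ with $|J|\le\pi$, together with the ``localized'' polynomials $\phi_{J,R}:=\prod_{i\in J}x_i\prod_{j\in R}(1-x_j)$ for $|J|<\pi$ and rows $R$ of $A$ with $R\cap J=\emptyset$. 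Since $\pi=O(1)$, we have $|\sap|=O(n^\pi+mn^{\pi-1})$, which is polynomial, so the resulting relaxation is polynomial-sized.

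The combinatorial heart of the proof is a \emph{uniform violated row} lemma that I would establish first: for every valid pitch-$\leq\pi$ inequality $p(x)=\sum_{i\in S}a_ix_i-a_0\geq 0$ for $\F_A$ and every ``bad'' subset $J\subseteq S$ with $|J|<\pi$ and $p(x_J)<0$, there exists a row $R_J$ of $A$ with $R_J\subseteq S\setminus J$. To see this, set $\tilde x_i=1$ iff $i\in J\cup([n]\setminus S)$; then $p(\tilde x)=p(x_J)<0$, so $\tilde x\notin\F_A$ and some row $R$ must satisfy $\tilde x_j=0$ for every $j\in R$, which forces $R\subseteq S\setminus J$. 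The payoff is that this single row $R_J$ is violated by \emph{every} extension $x_I$ with $I\cap S=J$, because $I\cap R_J\subseteq I\cap(S\setminus J)=\emptyset$.

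Equipped with the lemma, I would construct the certificate by adapting the template~\eqref{eq:interpol}, working with the marginalized Kronecker polynomials $\delta^S_J$ in place of $\delta_I$. Partitioning $\{J\subseteq S\}$ into good ($p(x_J)\geq 0$) and bad ($|J|<\pi$, $p(x_J)<0$), the pitch condition forces all $J$ with $|J|\geq\pi$ to be good, so only polynomially many bad $J$'s arise. Because $R_J\subseteq S\setminus J$, every variable $x_j$ with $j\in R_J$ appears inside $\delta^S_J$ as the factor $(1-x_j)$, yielding $\delta^S_J\cdot g_{R_J}(x)\equiv-\delta^S_J\pmod{\IB}$ via the same cancellation $(1-x_j)x_j\equiv 0$ used in~\eqref{eq:interpol}. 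The template then produces the identity
\[
p(x)\;\equiv\;\Bigl(\sum_{J\text{ good}}\delta^S_J\sqrt{p(x_J)}\Bigr)^{2}\;+\;\sum_{J\text{ bad}}\bigl(\delta^S_J\sqrt{-p(x_J)}\bigr)^{2}\, g_{R_J}(x)\pmod{\IB}.
\]

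The hard part will be turning this into a genuine $\sap$-$\sos$ certificate, since $\delta^S_J$ has degree $|S|$ (possibly $\Theta(n)$) and thus does not itself live in $\langle\sap\rangle$. I plan to replace $\delta^S_J$ throughout the identity by the low-complexity polynomial $\phi_{J,R_J}\in\sap$, which shares the essential algebraic properties: it is idempotent modulo $\IB$, satisfies $\phi_{J,R_J}\cdot g_{R_J}(x)\equiv-\phi_{J,R_J}$, and evaluates to $1$ on every $x_I$ with $I\cap S=J$. The substitution is not innocuous: $\phi_{J,R_J}$ is a coarser indicator than $\delta^S_J$ (it also equals $1$ on some $x_I$ with $I\cap S\supsetneq J$), so the main technical task is to show that the resulting over-counts are absorbed using the pitch structure. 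Concretely, I expect to verify pointwise that whenever $|I\cap S|\geq\pi$ the non-negativity of $p(x_I)$, coupled with the low-degree monomials $\prod_{i\in J}x_i\in\sap$ acting as ``low-degree glue'' between the $\phi_{J,R_J}$'s, absorbs the spurious contributions, while for $|I\cap S|<\pi$ the correct bad term dominates. The bound $\pi=O(1)$ is essential both for keeping $|\sap|$ polynomial and for making this bookkeeping finite.
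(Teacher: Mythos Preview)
Your ``uniform violated row'' lemma is correct and is essentially the inductive step hidden inside the paper's structural Lemma~\ref{th:bzcore}. The displayed $\delta^S_J$-identity is also correct. The gap is in the ``hard part'', and it is genuine: the proposed spanning set is too small. The substitution $\delta^S_J\to\phi_{J,R_J}$ only touches the \emph{bad} summands ($|J|<\pi$); for the \emph{good} summands you have exponentially many $\delta^S_J$ with $|J|\ge\pi$, and you offer only the degree-$\le\pi$ monomials to cover them. That does not suffice. Concretely, take the full-circulant matrix $\FC$ with $n=4$ and the pitch-$2$ inequality $\sum_j x_j\ge 2$. All of your polynomials $\phi_{\emptyset,R}$ and $\phi_{\{i\},R}$ vanish on every $x_I$ with $|I|\ge 2$, so on those points your certificate must come entirely from $\{1,x_i,x_ix_j\}$ multiplied by conic $\ell$'s. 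But the (symmetric, signed) linear functional supported on $|I|\in\{2,3,4\}$ with weights $(y_2,y_3,y_4)=(3,-2.4,1)$ satisfies $y[q\cdot\ell]\ge 0$ for every such $q\in\sap$ and every $\ell\in\{1,x_j,g_j\}$ (one checks e.g.\ $y[x_1g_1]=0.2$, $y[g_1]=0.5$, $y[x_1x_2g_1]=0.8$, $y[x_1x_2x_3]=0.4$, all other cases following by symmetry), yet $y[\sum_j x_j-2]=-0.4<0$. So there is \emph{no} certificate of the form~\eqref{eq:certificate} with your $\sap$, and hence the ``absorb the over-counts with low-degree glue'' step cannot be carried out as you describe. (If you intend a genuine $\sos$ certificate with cross-term squares rather than the Sherali--Adams style~\eqref{eq:certificate}, then your ``verify pointwise'' strategy no longer applies, and you would need a completely different argument.)

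The paper's proof works very differently. It is an induction on $\pi$ built on Lemma~\ref{th:bzcore}, which extracts not one row per bad $J$ but a \emph{set} $C$ of at most $\pi$ rows whose supports lie in $\supp(a)$; the relevant variable set is the overlap $V=V_C=\bigcup_{i\neq j\in C}A_i\cap A_j$, not all of $\supp(a)$. One then multiplies $a^\top x-a_0$ by $1=\sum_{I\subseteq V}\delta_I^V$ and splits into three pieces: $\delta_\emptyset^V$ (handled via~\eqref{eq:bz3}), the terms $\delta_J^V$ with $0<|J|<\pi$ (handled by induction, since fixing $J$ drops the pitch), and the aggregate $\delta_{\ge\pi}^V$ (handled by \emph{symmetric} polynomials $\sum_{|J|=k,\,J\subseteq V}\delta_{I\cup J}^V$). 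It is precisely these symmetric polynomials over $V_C$---absent from your $\sap$---that make the ``good part'' tractable; since $V_C$ ranges over only $m^{O(\pi)}$ possibilities, the resulting spanning set stays polynomial-size.
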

Note that the $\sap$-$\sos$ relaxation of Theorem~\ref{th:setcover} is completely determined by defining the set $\sap$ (see Section~\ref{sect:dual_lasserre} for a discussion on the size and on set of variables that appear in a generic $\sap$-$\sos$ relaxation). A closer look will reveal (see Section~\ref{sect:BZ-LP}) that the $\sap$-$\sos$ relaxation is actually a linear program corresponding to the generalized Sherali-Adams relaxation $\sap$-SA (see Section~\ref{sect:dual_lasserre}).

\paragraph{Preliminaries}
Given a vector $a\in \R^n$, the support of $a$, denoted $supp(a)$, is the set $\{i\in [n]: a_i\not=0\}$.
 Let $A_i\subseteq \{1,\ldots,n\}$ be the support of the $i$-th row of $A$. By overloading notation, we also use $A_i$ to denote the corresponding set of variables $\{x_j:j\in A_i\}$. We assume that $A$ is \emph{minimal}, i.e. there is no $i\not=j$ such that $A_i\subseteq A_j$.

For any given $T,F\subseteq [n]$ with $T\cap F=\emptyset$, let $\F_{A_{(T,F)}}$ denote the subregion of $\F_A$ with $x_i=1$ for all $i\in T$, and $x_j=0$ for all $j\in F$. Let $A_{(T,F)}$ be the matrix that is obtained from $A$ by removing all the rows where $x_i$ appears for $i\in T$ (these constraints are satisfied when $x_i=1$ for all $i\in T$) and setting to zero the $j$-th column for all $j\in F$. We will assume that $A_{(T,F)}$ is minimal by removing the dominated rows. Therefore, $\F_{A_{(T,F)}}=\{x\in \{0,1\}^n:A_{(T,F)}x\geq e, x_i=1  \ \forall i\in T, x_j=0  \ \forall j\in F\}$ and $\F_{A_{(T,F)}}\subseteq \F_{A}$.

For the sake of simplicity, we add the non-negative constraints $x_i\geq 0$ for $i\in[n]$ to the set of valid constraints that define the semialgebraic set \eqref{setcov}. This is not strictly necessary, since $x_i=x_i^2$ and therefore $x_i\geq 0$, but it will simplify the exposition.

 \subsection{Proof of Theorem~\ref{th:setcover}}
Let $a^\top x-a_0\geq 0$ be a valid inequality over $\F_A$ of pitch $\pi(a,a_0)\leq \pi$, with $a\geq 0$. First, we show a $\sos$ certificate of non-negativity for $a^\top x-a_0$. Then, we collect the polynomials we used in the $\sos$-certificate and put them in the ``bag'' $\sap$. So, the set of polynomials $\sap$ of Theorem~\ref{th:setcover} will be completely defined at the end of this proof, and its definition will naturally follow from the given $\sos$ certificate.

For the time being, it is sufficient to say that $\sap$ is a set of polynomials of size $(mn)^{O(1)}$, for any fixed $\pi=O(1)$. In $\sap$ every polynomial has the following form: $\sum_{J\in W}\delta_J^V$ for some $V\subseteq [n]$ and $W\subseteq 2^V$. In short, we will say that set $\sap$ is \emph{delta-structured} to denote this structure.

 By \eqref{Kr4}, note that $\sum_{J\in W}\delta_J^V\equiv (\sum_{J\in W}\delta_J^V)^2$, and therefore $q(x)\equiv q(x)^2$ for all $q(x)\in \sap$. Moreover, every polynomial in $\sap$ is non-negative over the Boolean hypercube. In the remainder a certificate of non-negativity will be congruent (mod $\IB$) to the following form:
\begin{align}\label{eq:certificate}
  \sum_{i} q_i(x) \overbrace{\left(\lambda_i^\top (Ax - e) + \gamma_i^\top x+\mu_i\right)}^{\text{conical combination of constraints}}, \qquad \text{ for some } q_i(x)\in \sap, \lambda_i,\gamma_i,\mu_i\geq 0 .
\end{align}
By the above properties, this certificate can be immediately transformed into a $\sap$-$\sos$ certificate.

The proof of Theorem~\ref{th:setcover} will be by induction on the pitch value $\pi$. The base of the induction $\pi=0$ is trivial: in this case we must have  $a_0\leq 0$, and $\Ss_A(0)=\{1\}$ is sufficient to prove that $-a_0\geq 0$. Note that $\Ss_A(0)=\{1\}$ is independent on the matrix $A$ and it is delta-structured (recall if $V=\emptyset$ then $\delta_J^V=1$).

By induction hypothesis, for any given $0\leq p\leq \pi-1$ and any given constraint matrix $A'$,  we assume that any valid pitch-$p$ inequality for $\F_{A'}$ admits a $\Ss_{A'}(p)$-$\sos$ certificate where $\Ss_{A'}$ is delta-structured. We will prove that the induction hypothesis also holds for pitch $\pi$ (induction step).

We proceed ``backwards'', as in ~\eqref{eq:interpol}. We start multiplying $a^\top x-a_0$ by $\sum_{I\subseteq V}\delta_I^V$, for a suitably chosen set $V\subseteq [n]$ that will be specified soon. Recall that $\sum_{I\subseteq V}\delta_I^V=1$ (see \eqref{Kr1}). Let $(a^\top x-a_0)_{(T,F)}$ denote $(a^\top x-a_0)$ after setting $x_i=1$ for $i\in T$ and $x_j=0$ for $j\in F$. By~\eqref{Kr5}, note that $\delta_J^V  (a^\top x-a_0)\equiv \delta_J^V  (a^\top x-a_0)_{(J,V\setminus J)}$.
Let $\delta_{\geq\pi}^V:=\sum_{I\subseteq V, |I|\geq \pi}\delta_{I}^V$ (zero if $|V|<\pi$).
 It follows that:
{
\begin{align}
&a^\top x-a_0 = \overbrace{\left(\sum_{I\subseteq V} \delta_{I}^V \right)}^{=1}(a^\top x-a_0)\nonumber\\
&\equiv \underbrace{\delta_{\emptyset}^V (a^\top x-a_0)_{(\emptyset,V)}}_{\textsc{First}}+\underbrace{ \left(\sum_{J\subseteq V,0<|J|< \pi} \delta_{J}^V (a^\top x-a_0)_{(J,V\setminus J)}\right)}_{\textsc{Second}} + \underbrace{(\delta_{\geq\pi}^V)(a^\top x-a_0)}_{\textsc{Third}}\label{eq:split}.
\end{align}
}
Therefore, showing a $\sos$ certificate for $a^\top x-a_0$ boils down to provide a $\sos$ certificate for each of the three summands, \textsc{First}, \textsc{Second} and \textsc{Third}, in~\eqref{eq:split}. Before doing this we need to specify the set $V\subseteq [n]$.
\paragraph{How to Choose $V$}
Set $V$ is chosen according to the following Lemma~\ref{th:bzcore} (see \cite{BienstockZ04,zuckerberg2004set}) which gives a structural property of valid inequalities for set cover. The statement of Lemma~\ref{th:bzcore} is slightly different from Proposition 4.22 in~\cite{zuckerberg2004set} (or Theorem 6.3 in~\cite{BienstockZ04}). The main difference is given by Property~\eqref{eq:bz4} (see Lemma~\ref{th:bzcore}). This property is not explicitly given in~\cite{BienstockZ04,zuckerberg2004set}, but it can be easily derived by their construction as explained in the proof that follows.
%
%
\begin{lemma}\cite{BienstockZ04,zuckerberg2004set}\label{th:bzcore}
Suppose $a^\top x-a_0\geq 0$ is a valid inequality for $\F_A$ with $a\geq 0$. Then there is a subset $C=C(a,a_0)$ of the rows of $A$ with $|C|\leq \pi(a,a_0)$, such that
\begin{align}
&A_i\subseteq \supp(a), \quad \forall i\in C, \label{eq:bz1}\\
&(a^\top x- a_0)_{(\emptyset,V)}\geq 0 \text{ is valid for }\F_C,\label{eq:bz3}\\
&\F_{A_{(\emptyset,V)}}\not=\emptyset, \label{eq:bz4}
\end{align}
where $V:= \bigcup_{{i,j\in C, i\not=j}} A_i\cap A_j$ is the set of variables occurring in more than one row of~$C$, and $\F_C:=\{x\in [0,1]^n:(\sum_{j\in A_i}x_j-1)_{(\emptyset,V)}\geq 0  \ \forall i\in C\}$.
\end{lemma}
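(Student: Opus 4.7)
My plan is to prove this lemma by constructing $C$ incrementally through a greedy procedure that adds one row at a time, terminating in at most $\pi := \pi(a,a_0)$ iterations. The trivial case $a_0\le 0$, equivalently $\pi=0$, is handled by $C=\emptyset$, which makes $V=\emptyset$ and conditions \eqref{eq:bz1}--\eqref{eq:bz4} hold (the last assuming $\F_A\neq\emptyset$).

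For the inductive step I would maintain throughout the procedure the invariants that $A_{i_l}\subseteq\supp(a)$ for every row already added to $C$ (so automatically $V_k:=\bigcup_{l\neq l'}A_{i_l}\cap A_{i_{l'}}\subseteq\supp(a)$) and that $\F_{A_{(\emptyset, V_k)}}\neq\emptyset$. The procedure halts as soon as \eqref{eq:bz3} holds; otherwise there is a $y\in\F_C$ with $y_j=0$ for $j\in V_k$ and $a^\top y<a_0$. I then modify $y$ into $y''$ by zeroing every coordinate outside $V_k\cup\supp(a)$ and assigning the coordinates in $[n]\setminus(V_k\cup\supp(a))$ to satisfy as many rows of $A$ as possible. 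Since $a_j=0$ for $j\notin\supp(a)$, this preserves $a^\top y''=a^\top y<a_0$, so validity of $a^\top x\ge a_0$ over $\F_A$ forces $y''\notin\F_A$, meaning some row $A_i$ is violated. Any such violated row necessarily satisfies $A_i\subseteq V_k\cup\supp(a)=\supp(a)$ (using $V_k\subseteq\supp(a)$), and I would add this row to $C$ as the new $A_{i_{k+1}}$.

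Termination in at most $\pi$ steps is forced by the definition of pitch: any $y\in\F_C$ with $y_j=0$ on $V_k$ has at least $|C|$ ones inside $\supp(a)$, one in each of the pairwise disjoint sets $A_{i_l}\setminus V_k$, while any $0/1$ assignment with at least $\pi$ ones in $\supp(a)$ automatically satisfies $a^\top y\ge a_1+\cdots+a_\pi\ge a_0$; the continuous version of this bound follows because the LP minimum of $\sum_{j\in\supp(a)}a_jx_j$ subject to $\sum_{j\in\supp(a)}x_j\ge\pi$ with $x\in[0,1]^n$ is attained at an integer vertex. Hence whenever \eqref{eq:bz3} fails we must have $|C|<\pi$, so after at most $\pi$ augmentations the procedure halts with \eqref{eq:bz3} satisfied.

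The main obstacle I expect is preserving invariant \eqref{eq:bz4} as $V_k$ grows: each new row enlarges $V$ by the intersections $\bigcup_{l\le k}A_{i_{k+1}}\cap A_{i_l}$, and a priori this might destroy every feasible extension. The remedy will be to choose $A_{i_{k+1}}$ carefully among the candidate violated rows so that, after flipping coordinates outside $V_{k+1}\cup\supp(a)$ to $1$ as needed, $y''$ itself witnesses $\F_{A_{(\emptyset,V_{k+1})}}\neq\emptyset$. Executing this choice consistently with the earlier requirements --- and, if the witness $y$ happens to be fractional rather than $0/1$, first passing to an integer vertex of the appropriate face of $\F_C$ via an LP-rounding argument --- is the delicate technical step where I expect the main effort of the formal proof to lie.
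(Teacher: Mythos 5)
Your greedy scheme gets the easy parts right (termination in at most $\pi$ steps via the pairwise disjoint sets $A_{i_l}\setminus V_k$, and the fact that a violated row of the modified witness must lie inside $\supp(a)$), but it is incomplete exactly at the point you flag, and the remedy you sketch cannot work as stated. The point $y''$ is constructed so that $a^\top y''<a_0$, and you invoke validity of $a^\top x\geq a_0$ over $\F_A$ to conclude $y''\notin\F_A$ and extract a violated row; but then $y''$ is by construction \emph{infeasible} for $Ax\geq e$, so it can never witness $\F_{A_{(\emptyset,V_{k+1})}}\neq\emptyset$, which requires a point of $\F_A$ vanishing on $V_{k+1}$. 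Nothing in your procedure rules out, say, that some row $A_h$ of $A$ ends up entirely inside $V_{k+1}$, which would make $\F_{A_{(\emptyset,V_{k+1})}}=\emptyset$ outright (and would also make $\F_C=\emptyset$, so your halting condition \eqref{eq:bz3} could trigger vacuously while \eqref{eq:bz4} fails). Since \eqref{eq:bz4} is precisely what the derivation of the \textsc{Second} summand in the proof of Theorem~\ref{th:setcover} relies on, this is the heart of the lemma and cannot be deferred.

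The paper's proof avoids the difficulty by a different organization: induction on the pitch rather than a greedy loop. One picks a single row $A_v\subseteq\supp(a)$ (it exists by validity), singles out the index $v(1)\in A_v$ of \emph{minimum} coefficient, zeroes out only $V_v=(A_v\setminus\{v(1)\})\cap\bigcup_{i\neq v}A_i$, and sets $x_{v(1)}=1$; this yields a valid inequality of pitch at most $\pi-1$ for the restricted instance $A_{(\emptyset,V_v)}$, to which the induction hypothesis applies, and $C:=\{v\}\cup C'$. Non-emptiness \eqref{eq:bz4} then comes for free: minimality of $A$ (no row contained in another) guarantees every other row keeps an element outside $A_v$ and hence survives the zeroing of $V_v$, the inductive guarantee handles $V'$, and $V\subseteq V_v\cup V'$. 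To salvage your greedy formulation you would need an analogous selection rule --- reserving a designated ``private'' element of each chosen row that is never added to $V$, justified by minimality of $A$ --- which essentially reconstructs the paper's inductive argument.
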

%
\begin{proof}

The proof is by induction on $\pi=\pi(a,a_0)$. If $\pi=0$ then $|C|=0$, and it follows that $\F_C=\{x:x\in [0,1]^n\}$ and $V=\emptyset$. A pitch zero inequality must have $a_0\leq 0$. So, since $a\geq 0$, $a^\top x- a_0\geq 0$ is indeed valid for $\F_C$ and for $\F_{A_{(\emptyset,\emptyset)}}=\F_{A}$ ($\not=\emptyset$).

Now, assume that the claim holds for all valid inequalities of pitch $p$ with $0\leq p\leq \pi -1$ and $\pi\geq 1$. Consider a valid inequality $a^\top x-a_0\geq 0$ of pitch $\pi$. Note that there must be some $v\in[m]$ such that $A_v\subseteq \supp(a)$  or, otherwise, we could set $x_j=0$ for all $j\in \supp(a)$, and $x_j=1$ everywhere else, and thereby satisfy every constraint and nevertheless have $a^\top x =0$ (so contradicting the hypothesis that $a^\top x-a_0\geq 0$ is a valid inequality of pitch $\pi\geq 1$). Choose $A_v\subseteq \supp(a)$. Note that we are assuming, w.l.o.g., that $A$ is minimal, so there is no $A_i$, with $i\in[m]$ and $i\not=v$, that is a proper subset of $A_v$. Let $v(1)\in A_v$ be the index of the minimum coefficient $a_j:j\in A_v$, where $a_j$ is the coefficient of variable $x_j$ in the valid inequality $a^\top x-a_0\geq 0$.

We first obtain a strengthen by setting to zero all the variables from $V_v$, where $V_v$ are all the variables from all $A_i$, with $i\not=v$, that appear in $A_v-\{v(1)\}$, i.e. $V_v:= (A_v-\{v(1)\})\bigcap(\cup_{i\not=v} A_i)$. Consider $\F_{A_{(\emptyset,V_v)}}$ and note that $\F_{A_{(\emptyset,V_v)}}\not=\emptyset$ because by assumption no $A_j\subset A_v$ and therefore $(a^\top x- a_0)_{(\emptyset,V_v)}\geq 0$ is a valid inequality for $\F_{A_{(\emptyset,V_v)}}$.
 Set $x_{v(1)}=1$ in $(a^\top x- a_0)_{(\emptyset,V_v)}\geq 0$ to get $(a^\top x- a_0)_{(\{v(1)\},V_v)}\geq 0$ which is a valid inequality for $\F_{A_{(\emptyset,V_v)}}$. Note that the pitch $p$ of $(a^\top x- a_0)_{(\{v(1)\},V_v)}$ is such that $p\leq \pi-1$ and therefore, by induction hypothesis, it satisfies the properties of the claim when we consider $(a^\top x- a_0)_{(\{v(1)\},V_v)}\geq 0$ as valid inequality for $\F_{A_{(\emptyset,V_v)}}$.
 Let $a'$ be the vector that is obtained from $a$ by setting to zero all the coefficients from $V_v\cup\{v\}$ and let $a_0':=a_0-a_{v(1)}$, so $(a^\top x- a_0)_{(\{v(1)\},V_v)}=a'^\top x-a_0'$. By the induction hypothesis there must be a subset $C'$ of the rows from  $A':=A_{(\emptyset,V_v)}$ such that $|C'|\leq p$ and
\begin{align}
&A'_i\subseteq \supp(a'), \quad \forall i\in C',\\
&(a'^\top x- a'_0)_{(\emptyset,V')}\geq 0 \text{ is valid for }\F_{C'},\\
&\F_{A'_{(\emptyset,V')}}\not= \emptyset,\label{eq:bz4'}
\end{align}
where $V'$ is the set of variables occurring in more than one row from $C'$ and $\F_{C'}=\{x\in [0,1]^n:(\sum_{j\in A'_i}x_j-1)_{(\emptyset,V')}\geq 0, i\in C'\}$.

Define $C:=\{v\}\cup C'$. Therefore Condition~\eqref{eq:bz1} is satisfied by construction.
Moreover, note that in $\F_C$ (as defined in the statement of Lemma~\ref{th:bzcore}) all the constraints are disjoint, and basic feasible solutions are integral (in case needed, we refer to~\cite{zuckerberg2004set} for more details).
%
Suppose that we are given an arbitrary $\tilde{x}\in\{0,1\}^n$ that satisfies $\F_C$. Consider that we must have $\tilde{x}_j=1$ for some $j\in A_v$, such that $a_j\geq a_{v(1)}$. If we define $x'$ to be the same as $\tilde{x}$ but with $x'_j=0$, then $x'$ still satisfies $\F_{C'}$. Thus by induction $a'^\top x'\geq a_0-a_{v(1)}$ which implies that $a^\top \tilde{x}= a'^\top x'+a_j \tilde{x}_j=a'^\top x'+a_j \geq a_0-a_{v(1)}+a_j\geq a_0$. This proves Property~\eqref{eq:bz3}.

To prove Property~\eqref{eq:bz4} we show that we can set to zero all the overlapping variables from the rows in $C$, namely the variables from $V$ and still get a non empty set of integral solutions, i.e. $\F_{A_{(\emptyset,V)}}\not=\emptyset$.
Indeed, by the induction hypothesis we have that $\F_{A'_{(\emptyset,V')}}\not=\emptyset$, where $A'_{(\emptyset,V')}=A_{(\emptyset,V_v\cup V')}$. Therefore $\F_{A_{(\emptyset,V)}}\not=\emptyset$ because $V\subseteq V_v\cup V'$.
\end{proof}

\paragraph{\textsc{First} $\sos$ Certificate}
Consider the \textsc{First} summand in~\eqref{eq:split}. 
By Lemma~\ref{th:bzcore}, we have that $(a^\top x- a_0)_{(\emptyset,V)}\geq 0$ is valid for $\F_C$ (see \eqref{eq:bz3}). Note that the linear constraints that define the feasible region $\F_C$ are just a subset of the linear constraints from $\F_A$ after setting to zero all the variables from $V$. It follows that
 $(a^\top x- a_0)_{(\emptyset,V)}=\left(\lambda^\top (Ax - e) + \gamma^\top x+\mu\right)_{(\emptyset,V)}$
for some $\lambda,\gamma,\mu \geq 0$.
Then:
{\small
\begin{align*}
\delta_{\emptyset}^V (a^\top x-a_0)_{(\emptyset,V)}
&= \delta_{\emptyset}^V \left(\lambda^\top (Ax - e) + \gamma^\top x+\mu\right)_{(\emptyset,V)}
\overset{\text{\eqref{Kr5}}}{\equiv} \delta_{\emptyset}^V \left(\lambda^\top (Ax - e) + \gamma^\top x+\mu\right).
\end{align*}
}
The latter has the form given by~\eqref{eq:certificate}, and it yields a $\sos$ certificate.
In order to obtain such a certificate it is sufficient to include in $\sap$ the multilinear polynomial $\delta_\emptyset^V$. With this aim, by using Lemma~\ref{th:bzcore}:
let $\C(\pi):=\{C:C\subseteq [m]\wedge |C|\leq \pi\}$ and $V_C:= \bigcup_{\substack{i,j\in C, i\not=j}} A_i\cap A_j$ be the set of variables occurring in more than one row with index from $C\in \C(\pi)$; Add to $\sap$ all $\delta_\emptyset^{V_C}$ with $C\in \C(\pi)$.  For any given constant pitch $\pi$, there are polynomially many such $\delta_\emptyset^{V_C}$, and one of them is equal to $\delta_\emptyset^V$ by Lemma~\ref{th:bzcore}.

\paragraph{\textsc{Second} $\sos$ Certificate}
Consider the \textsc{Second} summand in~\eqref{eq:split}.
By Property~\eqref{eq:bz4} we know that by setting to zero all the variables from $V$ we obtain a non-empty subset of feasible integral solutions. It follows that by setting $x_j=1$, for all $j\in J$, and $x_h=0$, for all $h\in V\setminus J$, we obtain a non-empty subset of feasible integral solutions, i.e. $\F_{A_{(J,V\setminus J)}}\not= \emptyset$ and $(a^\top x-a_0)_{(J,V\setminus J)}\geq 0$ is a valid inequality for the solutions in $\F_{A_{(J,V\setminus J)}}$ (since  $a^\top x-a_0\geq 0$ is by assumption a valid inequality for any feasible integral solution). Moreover the pitch $p$ of $(a^\top x-a_0)_{(J,V\setminus J)}\geq 0$ is strictly smaller than~$\pi$, $0\leq p\leq \pi-|J|$. By the induction hypothesis, it follows that $(a^\top x-a_0)_{(J,V\setminus J)}$ has a $\Ss_{A_{(J,V\setminus J)}} (p)$-$\sos$ certificate, which means that there is a $q(x)\in \Ss_{A_{(J,V\setminus J)}}(p)$ such that
\begin{align*}
  (a^\top x-a_0)_{(J,V\setminus J)}\equiv q(x) \left(\lambda_J^\top (Ax - e) + \gamma_J^\top x+\mu_J\right)_{(J,V\setminus J)},
\end{align*}
for some $\lambda_J,\gamma_J,\mu_J \geq 0$.
The claim follows by observing that
$$\delta_{J}^V q(x) \left(\lambda_J^\top (Ax - e) + \gamma_J^\top x+\mu_J\right)_{(J,V\setminus J)}\overset{\text{\eqref{Kr5}}}{\equiv} \delta_{J}^V q(x) \left(\lambda_J^\top (Ax - e) + \gamma_J^\top x+\mu_J\right).$$
Again, the latter has the form given by~\eqref{eq:certificate}.
Note that $\delta_{J}^V q(x)$ is delta-structured. We define the set $\sap$ so that it includes $p(x):=\delta_J^{V_C}\cdot q(x)$ for all $q(x)\in \Ss_{A_{(J,V_C\setminus J)}} (\pi-|J|)$ and for all $J\subseteq V_C,0<|J|< \pi$ and $C\in \C(\pi)$.

\paragraph{\textsc{Third} $\sos$ Certificate}

Finally, consider the \textsc{Third} summand from~\eqref{eq:split}.
Recall, see Definition~\ref{def:pitch}, that $0<a_1\leq a_2\leq \cdots \leq a_h$ and $a_j=0$ for $j>h$ for some $h\in [n]$, so $\supp(a)=\{1,\ldots,h\}$. 
By \eqref{eq:bz1}, $V\subseteq \supp(a)$. If $|V|<\pi$ then $\delta_{\geq \pi}^V$ is the null polynomial and we are done. Otherwise, let $a'_i := a_i$ for $i\in[\pi]$, $a_i':=a_\pi$ for $i=[h]\setminus[\pi]$ and $a_i':=0$ for $i\in \supp(a)\setminus V$.
%
It follows that
\begin{align*}
&\delta_{\geq \pi}^V\left(\sum_{i=1}^h a_i x_i-a_0\right)=\delta_{\geq \pi}^V\left(\sum_{i\in V} a_i x_i-a_0 + \sum_{i\in \supp(a)\setminus V} a_i x_i\right)\\
&=\delta_{\geq \pi}^V\left(\sum_{i\in V} a_i' x_i-a_0 + \sum_{i\in \supp(a)} (a_i-a_i') x_i\right)\\
%
%
&\overset{\text{\eqref{Kr5}}}{\equiv} \sum_{I\subseteq V\cap [\pi]} \ \sum_{k=\pi-|I|}^{|V|} \ \left(\overbrace{\sum_{\substack{J\subseteq V\setminus [\pi]\\ |J|=k}}\delta_{I\cup J}^V}^{p_{I,k}(x)} \left(\overbrace{\sum_{i\in I} a_i' +k a_\pi -a_0}^{\geq 0}+\sum_{i\in \supp(a)} (a_i-a_i') x_i\right)\right).
%
%
\end{align*}
The latter has again the form given by~\eqref{eq:certificate}, and it yields a $\sos$ certificate.  We define the set $\sap$ so that it includes the polynomials $p_{I,k}(x)$ that are used in the above formula. Note that each $p_{I,k}(x)$ is a symmetric polynomial with respect to the variables indexed by set $V\setminus [\pi]$; therefore it admits a succinct representation by the mean of elementary symmetric polynomials.

\subsubsection{Set $\sap$} \label{sect:sap}
We summarize the definition of $\sap$.
Let
\begin{align}
  \C(\pi)&:=\{C:C\subseteq [m]\wedge |C|\leq \pi\},\label{eq:cp} \\
  V_C&:= \bigcup_{\substack{i,j\in C, i\not=j}} A_i\cap A_j. \label{eq:vc}
\end{align}
Set $\sap$ includes the following polynomials:
{\small
\begin{align*}
  &\left\{\delta_\emptyset^{V_C} : C\in \C(\pi) \right\},  &\textsc{(First)}\\
  &\left\{ \delta_J^{V_C}\cdot q(x) : C\in \C(\pi), J\subseteq V_C \text{ with }0<|J|< \pi, q(x)\in \Ss_{A_{(J,V_C\setminus J)}} (\pi-|J|)\right\},  &\textsc{(Second)}\\
  &\left\{\sum_{\substack{J\subseteq V_C\setminus [\pi]\\ |J|=k}}\delta_{I\cup J}^V  : C\in \C(\pi),I\subseteq V_C \text{ with } |I|\leq \pi,k=\pi-|I|,\ldots,|V_C|\right\}. &\textsc{(Third)}
\end{align*}
}
Note that when $\pi\in \{0,1\}$ then $\sap=\{1\}$.
By a simple counting argument, we have $|\sap|=(mn)^{O(1)}$ for any fixed $\pi=O(1)$.

\begin{example}[pitch 2 certificate]\label{ex:proof}
    Consider the set cover instance defined by \eqref{eq:fullcirculant}, in Example~\ref{ex:fc1}, namely by a full-circulant constraint matrix FC. The entries of the $i$-th row of matrix  $FC$ are all equal to 1 but the $i$-th entry that is zero.
  Let $\FC_i:=[n]\setminus\{i\}$ denote the support of the $i$-th row of matrix $\FC$. Let $g_i(x):=\sum_{j\in \FC_i}x_j- 1\geq 0$ be the $i$-the constraint corresponding to row $\FC_i$.

   As already observed, $\sum_{j\in[n]} x_j\geq 2$ is a pitch 2 valid inequality for the feasible region of this set cover instance, and this inequality is ``hard'' to enforce by ``standard'' hierarchies like Lasserre/$d$-$\sos$ and $d$-SA (Sherali-Adams).

  We start considering the spanning set $\Ss_{\FC}(2)$ (defined in Section~\ref{sect:sap}). According to the definition of set $\C(2)$, see \eqref{eq:cp}, note that $\{1,2\}\in \C(2)$; then, see \eqref{eq:vc}, $V_{\{1,2\}}=\FC_1\cap \FC_2=\{3,\ldots,n\}$. For short let $V:=V_{\{1,2\}}$. The following set $\mathcal{P}$ of polynomials is a subset of $\Ss_{\FC}(2)$:
  \begin{align}\label{p}
\mathcal{P}:= \overbrace{\left\{\delta_\emptyset^{V} \right\}}^{\mathcal{P}_0}\cup
    \overbrace{\left\{\delta_{\{i\}}^{V}: i\in V \right\}}^{\mathcal{P}_1} \cup
    \overbrace{\left\{\sum_{\substack{I\subseteq V: \\|I|=k }}\delta_{I}^{V}: k=2,\ldots,n \right\}}^{\mathcal{P}_2}\subseteq \Ss_{\FC}(2).
  \end{align}
In addition to those listed above, note that in set $\Ss_{\FC}(2)$ there are also other polynomials. These polynomials are all the same under a permutation of the variables and they play a similar role due to the symmetry of the  example.
By using the above polynomials we obtain a proof of non-negativity as follows.

\begin{align}
&\sum_{j\in[n]} x_j- 2 =
\overbrace{
\left(
 \delta_\emptyset^{V}
+ \sum_{i\in V} \delta_{\{i\}}^{V}
+ \sum_{k=2}^{n} \  \left(\sum_{\substack{I\subseteq V:\\ |I|=k }}\delta_{I}^{V}\right)
 \right)
 }^{=1}\left(\sum_{j\in[n]} x_j- 2\right)\nonumber
 \\
%
%
%
%
&\equiv \underbrace{\delta_{\emptyset}^V
\left(x_1+x_2- 2\right)}_{\textsc{First}}
+
\underbrace{ \left(\sum_{i\in V} \delta_{\{i\}}^V \left(x_1+x_2 - 1\right)\right)}_{\textsc{Second}}
+
\underbrace{\sum_{k=2}^{n}\left( \sum_{\substack{I\subseteq V:\\ |I|=k }}\delta_I^V \right)\left(x_1+x_2+k- 2\right)}_{\textsc{Third}}\\
%
%
&\equiv \underbrace{\delta_{\emptyset}^V
\left(g_1(x)+g_2(x)\right)}_{\textsc{First}}
+
\underbrace{ \left(\sum_{i\in V} \delta_{\{i\}}^V g_i(x)\right)}_{\textsc{Second}}
+
\underbrace{\sum_{k=2}^{n}\left( \sum_{\substack{I\subseteq V:\\ |I|=k }}\delta_I^V \right)\left(x_1+x_2+(k- 2)\right)}_{\textsc{Third}}.\nonumber
\end{align}
The latter has the form given by~\eqref{eq:certificate}, and it yields a $\sos$ certificate (and it is a SA certificate) for the considered pitch 2 inequality.
\end{example}

\subsection{An Explicit Compact LP Formulation}\label{sect:BZ-LP}
For any fixed $\pi=O(1)$, in the proof of Theorem~\ref{th:setcover} we have shown that every valid inequality $a^\top x-a_0\geq 0$ of pitch at most $\pi$ admits a certificate of non-negativity that is congruent (mod $\IB$) to~\eqref{eq:certificate}. By reformulating this result in an equivalent way, we have shown that $a^\top x-a_0$ belongs to the following cone of polynomials:
{
\small
\begin{align}\label{eq:sccone}
 \C_{\Ss_A(\pi)}=\left\{ \overline{\sum_{i} q_i(x) \left(\lambda_i^\top (Ax - e) + \gamma_i^\top x+\mu_i\right)}: q_i(x)\in \sap, \lambda_i,\gamma_i,\mu_i\geq 0\right\} .
\end{align}
}
The dual cone $\C_{\Ss_A(\pi)}^*$ is the set of linear functionals $\yy{\cdot}$ that are non-negative on the primal cone satisfying (see the discussion in Section~\ref{sect:dual_lasserre}, constraints \eqref{eq:l1}, \eqref{eq:SAcond1}, \eqref{eq:SAcond2}):
%
\begin{align}
& \yy{1}=1;\label{eq:sc1}\\
& \yy{\overline{q(x)}} \geq 0, \quad \forall q(x) \in \Ss_A(\pi); \label{eq:sc2}\\
& \yy{\overline{q(x)\cdot  g_i(x)}} \geq 0, \quad \forall q(x) \in \Ss_A(\pi), \forall i\in[m+n];  \label{eq:sc3}
 \end{align}
where $g_i(x)\geq 0$, for $i\in[m+n]$, denotes a constraint from $Ax\geq e$, or $x_j\geq 0$ for $j\in [n]$.

As already discussed in Section~\ref{sect:dual_lasserre}, the linear functional inequalities \eqref{eq:sc1}, \eqref{eq:sc2} and \eqref{eq:sc3}, yield a \emph{linear program} of size $O(|\Ss_A(\pi)|)$. It is actually a \emph{hierarchy} of linear programs parameterized by the pitch~$\pi$. This relaxation can be seen as a generalized Sherali-Adams relaxation, where the standard monomial basis of degree $\leq d$ has been replaced with the set $\Ss_A(\pi)$ of high degree polynomials.

\begin{example}[Pitch 2 LP]\label{ex:lp}
We provide an explicit LP for the set cover instance considered in examples~\ref{ex:fc1} and \ref{ex:proof}.
With this aim, we can either compute an ordered basis for the cone of polynomials \eqref{eq:sccone}, or alternatively, an ordered spanning set and impose the linearity conditions (see the discussion in Section~\ref{sect:dual_lasserre} and Condition~\eqref{eq:linearity}). Here, we follow the second option.

Let $T:=\{\overline{x_i\cdot p}: p\in \Ss_{\FC}(2), i\in[n]\cup\{0\}\}$, and note that $T$ is a spanning set for~\eqref{eq:sccone}. The dimension of $T$ is equal to the dimension of the linear functionals $y$: there is one entry in $y$ for each polynomial in $T$. So vector $y$ is indexed by the polynomials in~$T$.
Consider set $\mathcal{P}\subseteq \Ss_{\FC}(2)$ of polynomials (see \eqref{p}).
\begin{itemize}
  \item \emph{Variables}. The LP variables are the entries of vector $y$. In particular there are the following variables: $\yy{\overline{q(x) x_j}}$, for $q(x)\in \Pp=\Pp_0\cup \Pp_1 \cup \Pp_2$ and $j\in[n]\cup \{0\}$ (recall $x_0:=1$).
  \item \emph{Constraints}. By~\eqref{eq:sc2},\eqref{eq:sc3} we have the following linear constraints in the LP formulation:
      \begin{align}
& \yy{\overline{q(x)x_j}} \geq 0, \quad \forall q(x) \in \mathcal{P}_2, j=0,1,2; \label{eq:FC1}\\
& \yy{\overline{\delta_\emptyset^V \cdot  g_i(x)}} = \yy{\delta_\emptyset^V  x_i}-\yy{\delta_\emptyset^V} \geq 0, \quad \forall i=1,2;\label{eq:FC2}\\
& \yy{\overline{\delta_{\{i\}}^V \cdot  g_i(x)}} = \yy{\delta_{\{i\}}^V x_1}+\yy{\delta_{\{i\}}^V x_2}-\yy{\delta_{\{i\}}^V} \geq 0, \quad \forall i\in V=\{3,\ldots,n\}\label{eq:FC3}.
\end{align}

\end{itemize}
The following valid inequality can be obtained by a conical combination of \eqref{eq:FC1}-\eqref{eq:FC3}:
\begin{align}\label{eq:temp}
\begin{split}
\yy{\delta_{\emptyset}^V x_1}&+ \yy{\delta_{\emptyset}^V x_2}-2 \yy{\delta_{\emptyset}^V}
+
\sum_{i\in V}\left( \yy{\delta_{\{i\}}^V x_1}+ \yy{\delta_{\{i\}}^V x_2} - \yy{\delta_{\{i\}}^V}\right)
+\\
&+\sum_{q(x)\in \Pp_2}\left(\yy{q(x) x_1}+\yy{q(x) x_2}+(k-2)\yy{q(x)}\right)\geq 0.
\end{split}
\end{align}
Note that $\sum_{q(x)\in \Pp}q(x)=1$, and therefore by the linearity conditions (see the discussion in Section~\ref{sect:dual_lasserre} and Condition~\eqref{eq:linearity})
the following is part of the set of the LP constraints (for $j=1,2$):
\begin{align*}
  \yy{\delta_{\emptyset}^V x_j}+ \sum_{i\in V} \yy{\delta_{\{i\}}^V x_j}
  +
  \sum_{q(x)\in \Pp_2}\yy{q(x) x_j} = y[x_j].
\end{align*}
Analogously,
note $\sum_{k=1}^{n}k\left(\sum_{\substack{I\subseteq V: \\|I|=k }}\delta_{I}^{V}\right)= \sum_{i\in V} x_i$, which by linearity, gives the following constraint that holds for the linear functional $y$ (and that is part of the LP formulation):
$$\sum_{k=1}^{n}k\cdot \yy{\sum_{\substack{I\subseteq V: \\|I|=k }}\delta_{I}^{V}}= \sum_{i\in V} \yy{x_i}.$$
Then, \eqref{eq:temp} and the linearity conditions imply the pitch 2 inequality $$\sum_{i\in [n]} \yy{x_i}-2\geq 0.$$
\end{example}


\section{The Bienstock-Zuckerberg Hierarchy}\label{sect:bz}
The Bienstock-Zuckerberg hierarchy (BZ)~\cite{BienstockZ04,zuckerberg2004set} generalizes the approach for set cover. The full description requires several layers of details and here we sketch only the main points. We refer to the original manuscripts for a more precise and comprehensive  description.

Any non-trivial constraint can be rewritten in the set cover form: $\sum_{i\in I} a_i x_i +\sum_{j\in J} a_j (1-x_j)\geq b$, with all the coefficients $a,b$ non-negative. Then the BZ hierarchy uses the standard concept of minimal covers\footnote{More precisely, in~\cite{BienstockZ04,zuckerberg2004set} a closely related concept that is called \emph{obstruction} is used.} (see e.g. \cite{Conforti:2014}): a \emph{minimal cover} is an inclusion-minimal set $C\subseteq \supp(a)$ such that $\sum_{j\not\in C} a_j<b $ and therefore $\sum_{j \in C} x_j' \geq 1$ is a valid inequality (where $x_j'=x_j$ if $j\in I$ or $x_j'=1-x_j$ else).
%
 In general, the number of minimal covers can be exponential so the idea in BZ is to generate only the ``$k$-small'' ones, which are added to the original relaxation. Here with ''$k$-small'' we mean all the valid minimal covers with all the variables from $I$ (or $J$) but at most $k$, or at most $k$ from $I$ (or $J$). These minimal covers can be enumerated in polynomial time for any fixed $k$.
 Then the set cover approach is applied to the set cover problem given by the $k$-small minimal covers. If the minimal covers are polynomially bounded this allows to generate the pitch bounded valid inequalities as for set cover (see the application below).
 Roughly speaking, the ``power'' of the BZ approach is given by the presence of the $k$-small minimal covers, if this set is empty then the hierarchy is not stronger than a variant of the Sherali-Adams hierarchy (see \cite{AuT18}).

 The BZ approach can be reframed into the $\sos$ framework by choosing the appropriate spanning polynomials. We omit the complete mapping because this would require the full description of BZ that is quite lengthy. Moreover, the most important application of BZ currently known is given by the set cover problem, which has been widely explained in previous sections.  By way of example, we show in \vaia{sect:smallmincovers} that we do not need to explicitly add the $k$-small minimal covers, since they can be implied by adding the ``right'' polynomials. By using the explained ideas, it should be easy to fill in the missing details.

\section{Chv\'{a}tal-Gomory Cuts}\label{sect:cg}

Consider a rational polyhedra $P=\{x\in \R^n: Ax\geq b\}$ with $A\in \Zz^{m\times n}$ and $b\in \Zz^{m}$. Inequalities of the form $(\lambda^\top A)x\geq \lceil \lambda^\top b\rceil$, with $\lambda\in \R_+^m$, $\lambda^\top A\in \Zz^n$, and $\lambda^\top b\not\in \Zz$, are commonly referred to Chv\'{a}tal-Gomory cuts (CG-cuts for short), see e.g. \cite{Conforti:2014}. CG-cuts are valid for the integer hull $P^*$ of $P$.

The following rational polyhedron is commonly referred to as the \emph{first CG closure}:
%
%
\begin{align}\label{cg-def}
P^{(1)}:=\{x\in \R^n:(\lambda^\top A)x\geq \lceil \lambda^\top b\rceil, \lambda\in [0,1]^m, \lambda^\top A\in \Zz^n\}.
\end{align}
In particular $P^{(1)}$ is a stronger relaxation of $P^*$ than $P$, i.e. $P^*\subseteq P^{(1)}\subseteq P$.
We can iterate the closure process to obtain the CG closure of $P^{(1)}$. We denote by $P^{(2)}$ this second CG closure. Iteratively, we define the $t$-th CG closure $P^{(t)}$ of $P$ to be the CG closure of $P^{(t-1)}$, for $t\geq 2$ integer. An inequality that is valid for $P^{(t)}$ but not for $P^{(t-1)}$ is said to have \emph{CG-rank} $t$.

Eisenbrand and Schulz \cite{EisenbrandS03} proved that for any polytope $P$ contained in the unit cube $[0, 1]^n$, one can choose $t = O(n^2 \log n)$ and obtain the integer hull $P^{(t)}=P^*$. Rothvo{\ss} and Sanit\'{a} \cite{RothvossS13} proved that there is a polytope contained in the unit cube whose CG-rank has order $n^2$, thus showing that the above bound is tight, up to a logarithmic factor.

The CG-cuts that are valid for $P^{(1)}$ and that can be derived by using coefficients in $\lambda$ of value $0$ or $1/2$ only, are called $\{0,1/2\}$-cuts. In~\cite{LetchfordPS11} it is shown that the separation problem for $\{0,1/2\}$-cuts  remains strongly NP-hard, even when all integer variables are binary, $P=\{x\in \R_+^n: Ax\leq e\}$ with $A\in \{0,1\}^{m\times n}$, and $e$ denote the all-one vector with $m$ entries. As pointed out in~\cite{LetchfordPS11}, the latter hardness proof can easily be adapted to set partitioning and set cover problems. This result implies that it is NP-hard to optimize a linear function over the first closure $P^{(1)}$. 

For min set cover problems, Bienstock and Zuckerberg~\cite{BienstockZ06} obtained the following result.
For an arbitrary fixed precision $\eps>0$ and a fixed $t\in \N$, choose $\pi$ such that $\left(\frac{\pi+1}{\pi}\right)^t\leq 1+\eps$. For any given set cover instance, let $opt$ denote the optimal integral value and let $opt^{(t)}$ ($\leq opt$) denote the optimal value over the $t$-th closure $P^{(t)}$. Bienstock and Zuckerberg~\cite{BienstockZ06} considered the optimal solution $x^*_\pi$ of value $opt_\pi$ ($\leq opt$) of a relaxation $R_\pi$ that  satisfies all pitch-$\pi$ valid inequalities for the integer hull.
Then, either ($opt\geq$) $opt_\pi \geq opt^{(t)}$, implying therefore that $R_\pi$ is a better relaxation than the $t$-th closure $P^{(t)}$, or ($opt \geq$) $opt^{(t)} \geq opt_\pi$. In the latter case, they proved that $x_\pi^*$ can be rounded to satisfy all the CG-cuts of rank $t$. Moreover, the value of the rounded solution is at most $1+\eps$ times larger than $opt_\pi$. This implies that $(1+\eps) opt_\pi\geq opt^{(t)}$ and therefore $opt_\pi\geq (1-\eps) opt^{(t)}$.
This gives a polynomial time approximation scheme (PTAS) for approximating $opt^{(t)}$, i.e. for the minimization of set cover objective functions over $P^{(t)}$.
It follows that the generalized $\sos$ (or Sherali-Adams) relaxation with high degree polynomials described in this paper, yields also to a PTAS for approximating set cover objective functions over $P^{(t)}$.


In the next section we present a more general result for packing problems, meaning that the coefficients of the non-negative matrix $A$ are not anymore restricted to be 0/1, or bounded (see \vaia{sect:smallmincovers}), as for the set cover case. It remains an interesting open question to extend the results for the set cover problem to general covering problems, namely covering problems with general non-negative matrices $A$.

\subsection{Approximating Fixed-Rank CG Closure for Packing Problems}\label{sect:cgpack}



In this section we consider packing problems and show that $d$-$\sos$
yields a PTAS for approximating over the $t$-th CG closure $P^{(t)}$, for any fixed $t$.
%
It follows that the $\sos$ approach can be used for approximating to any arbitrary precision, over any constant CG closure, for both packing and set cover problems (BZ guarantees this only for set cover problems). 

Consider any given $m\times n$ non-negative matrix $A$ and a vector $b\in \R_+^m$. Let $\F_{A,b}$ be the feasible region for the $0$-$1$ packing problem defined by $A$ and $b$:
$$\F_{A,b}=\{x\in \R^n: x_i^2-x_i=0  \ \forall i\in[n], Ax\leq b\}.$$
%

We extend the definition of pitch also for packing inequalities as follows.
\begin{definition}
For any given packing inequality $a_0- a^\top x\geq 0$, with $a_0,a\geq 0$ and indices ordered so that $0<a_1\leq a_2\leq \dots \leq a_h$ and $a_j=0$ for $j>h$, its \emph{pitch} $\pi(a,a_0)$ is the \emph{maximum} integer such that $a_0-\sum_{i=1}^{\pi(a,a_0)} a_i  \geq 0$.
\end{definition}
For example, the classical clique inequality $\sum_{i\in C} x_i \leq 1$, where $C$ is a clique, have pitch equal to one.

The following result for packing problems can be seen as the dual of Theorem~\ref{th:setcover} for set cover. It can be derived by using the so called ``Decomposition Theorem'' due to Karlin, Mathieu, and Nguyen \cite{karlin/ipco/2011}. Here we give a direct simple proof that follows the approach used throughout this paper.

\begin{lemma}\label{th:packing}
Consider any packing problem instance given by a matrix $A\in \R_+^{m\times n}$ and a vector $b\in \R_+^m$. Let $\pi=O(1)$ be a fixed positive integer. Then, $(\pi+1)$-$\sos$ satisfies all valid inequalities for $\F_{A,b}$ of pitch at most $\pi$.
\end{lemma}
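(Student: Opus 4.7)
I plan to prove Lemma~\ref{th:packing} by induction on the pitch $\pi$, mirroring the three-case decomposition from the proof of Theorem~\ref{th:setcover} and specializing it to the packing setting. After assuming without loss of generality that $\supp(a)=[h]$ with $0 < a_1 \leq a_2 \leq \cdots \leq a_h$, I take as the ``core'' the set $V := \{1,2,\ldots,\pi+1\}$ of the $\pi+1$ smallest-coefficient support indices and apply identities~\eqref{Kr1}--\eqref{Kr5} to write
\begin{align*}
a_0 - a^\top x \equiv \sum_{I \subseteq V} \delta_I^V \cdot (a_0 - a^\top x)_{(I, V\setminus I)} \pmod{\IB}.
\end{align*}
The base case $\pi=0$ is handled directly: pitch $0$ forces $a_0 < a_i$ for every $i \in \supp(a)$, so validity demands that each single-variable assignment $x_i = 1$ be infeasible in $\F_{A,b}$; this yields for each such $i$ a packing constraint $g_{j(i)}$ with $A_{j(i),i} > b_{j(i)}$, from which a $1$-$\sos$ certificate of $a_0 - a^\top x$ is assembled via a Farkas-type linear combination.

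For the inductive step, the summand with $I = V$ is the analog of the \textsc{Third} case: by the pitch definition $\sum_{i \in V} a_i > a_0$, so the partial assignment $x^V_V$ violates the inequality and, by validity, is infeasible in $\F_{A,b}$, forcing some packing constraint $g_{j^\ast}$ to be violated at $x^V_V$. A Farkas argument on the restricted (infeasible) packing LP then furnishes non-negative multipliers $\mu_j, \beta_k \geq 0$ with
\begin{align*}
\delta_V^V \cdot (a_0 - a^\top x)_{(V, \emptyset)} \equiv \delta_V^V \cdot \Bigl( \sum_j \mu_j\, g_j(x) + \sum_k \beta_k\, x_k \Bigr) \pmod{\IB}.
\end{align*}
For $1 \leq |I| \leq \pi$, a direct computation using the sorting of $a$ and the bound $a_0 < \sum_{i \in V} a_i$ (which yields $a_0 - \sum_{i\in I}a_i < \sum_{i\in V\setminus I}a_i \leq (\pi+1-|I|)\,a_{\pi+1} \leq \sum_{k=1}^{\pi+1-|I|} a_{\pi+1+k}$) shows that the restricted inequality, viewed on the variables $[n]\setminus V$, has pitch at most $\pi - |I| < \pi$, so the induction hypothesis supplies a $(\pi-|I|+1)$-$\sos$ certificate for it. The remaining case $I = \emptyset$, for which the restricted pitch need not strictly decrease, will be handled by a secondary induction on $|\supp(a)|$, which strictly decreases from $h$ to $h-(\pi+1)$ upon restriction to $x_V = 0$.

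The main technical obstacle will be the book-keeping on SOS degrees when lifting a restricted certificate by multiplication with $\delta_I^V$: a naive lift of a restricted $(\pi-|I|+1)$-$\sos$ certificate by $\delta_I^V$ (of degree $\pi+1$) would produce squares of degree up to $2\pi-|I|+2$, exceeding the $(\pi+1)$-$\sos$ budget. The resolution relies on combining the idempotency $\delta_I^V \equiv (\delta_I^V)^2 \pmod{\IB}$ with the substitution $\delta_I^V \cdot p(x) \equiv \delta_I^V \cdot p(x^V_I)$: by first substituting $x_i=1,\ i\in I$ and $x_j=0,\ j\in V\setminus I$, inside each SOS polynomial of the restricted certificate so that it involves only variables in $[n]\setminus V$, and then re-absorbing $\delta_I^V$ into the squares via idempotency, one produces a lifted certificate whose squares stay within degree $\pi+1$, thereby placing the full certificate in the $(\pi+1)$-$\sos$ cone.
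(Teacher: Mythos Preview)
Your inductive decomposition over $V=\{1,\dots,\pi+1\}$ has a genuine degree gap that the idempotency trick does not close. After substituting $x^V_I$ inside the restricted SOS polynomials $q_k'$ so that they live in $\R[x_{[n]\setminus V}]$, the lifted square is $(\delta_I^V\,q_k')^2$; but since the variables of $\delta_I^V$ and $q_k'$ are disjoint, the multilinear degree of $\delta_I^V\,q_k'$ is exactly $|V|+\deg(q_k')=(\pi+1)+\deg(q_k')$. For $1\le|I|\le\pi$ your induction gives $\deg(q_k')\le\pi-|I|+1$, so the lifted square has base degree $\le 2\pi-|I|+2$, which exceeds $\pi+1$ for every $|I|\le\pi$. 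The $I=\emptyset$ branch is worse: the secondary induction on $|\supp(a)|$ hands you a $(\pi+1)$-$\sos$ certificate for the restricted inequality, and multiplying by $\delta_\emptyset^V$ of degree $\pi+1$ yields squares of base degree up to $2(\pi+1)$; moreover each further step of the secondary recursion multiplies in another degree-$(\pi+1)$ factor, so the degree grows linearly in the recursion depth. Idempotency $(\delta_I^V)^2\equiv\delta_I^V$ turns the product into a sum of squares but never lowers the degree of the polynomials being squared.

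The paper avoids this entirely by abandoning the induction-on-pitch template and working globally over the \emph{full} support $S=\supp(a)$ with \emph{truncated} Kronecker deltas $\bar\delta_I^S$: one zeroes every monomial $x_J$ with $J\in F:=\{J\subseteq S:\sum_{j\in J}a_j>a_0\}$. Because any $J$ with $|J|\ge\pi+1$ lies in $F$ (as $a_1,\dots,a_{\pi+1}$ are the smallest coefficients and already sum past $a_0$), the truncation forces $\deg(\bar\delta_I^S)\le\pi$ outright, with no recursion. Writing $a_0-a^\top x=\sum_{I\in T}(a_0-\sum_{i\in I}a_i)\,\bar\delta_I^S+f(x)$ then leaves a residual $f$ of degree $\le\pi+1$ supported on infeasible monomials, each of which is certified by a single violated packing constraint. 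If you want to salvage an inductive route, you would need to replace the coarse factor $\delta_I^V$ by something whose degree \emph{plus} the degree from the recursive certificate stays within $\pi+1$; the paper's truncation is precisely such a replacement, and it is what your proposal is missing.
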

\begin{proof}
Suppose $a_0 - a^\top x\geq 0$ is a valid inequality for $\F_{A,b}$ of pitch $\pi$ with $a_0,a\geq 0$. The claim follows from Proposition~\ref{th:pdrel} by showing that $a_0 - a^\top x$ admits a $(\pi+1)\text{-}\sos$ certificate.

Let $S:=\supp(a)$ and $x_I:=\prod_{i\in I}x_i$, for $I\subseteq [n]$. By~\eqref{Kr5} (choose $Z=I$), for any given $I\subseteq S$ we have $x_I(a_0 - a^\top x)\equiv x_I(a_0 - \sum_{i\in I} a_i- \sum_{i\not\in I} a_i x_i) \pmod {\IB}$.

Let $F:=\{I:I\subseteq S,(a_0 - \sum_{i\in I} a_i)<0\}$ and $T:=\{J:J\subseteq S, J\not \in F\}$ (and therefore if we set to 1 all the variables $x_i$ with $i\in I$, for $I\in F$, then the assumed valid inequality $a_0 - a^\top x\geq 0$ is violated).
 Let $V:=\{x\in \R^n:x_I=0 \  \forall I\in F , \  x_k^2-x_k=0 \ \forall k\in[n]\}$ and note that any feasible integral solution belongs to $V$.

For any given $\delta_J^S$, let $\bar{\delta}_J^S$ denote the ``truncated'' version of ${\delta}_J^S$ obtained from $\delta_J^S$ by zeroing all the monomials $x_I$ with $I\in F$ (observe that $\bar{\delta}_J^S=0$ for $J\in F$). Clearly, $\deg(\bar{\delta}_J^S)\leq \pi$, since $a_0 - a^\top x\geq 0$ has pitch at most $\pi$.
%
%
Note that $\sum_{I\subseteq S} \bar{\delta}_I^S=\sum_{I\in T} \bar{\delta}_I^S=1$,  $(\bar{\delta}_I^S)^2\equiv \bar{\delta}_I^S \pmod{\I(V)}$ and $\bar{\delta}_I^S(a_0 - a^\top x)\equiv \bar{\delta}_I^S(a_0 - \sum_{i\in I} a_i) \pmod{\I(V)}$. These can be derived by \emph{multilinearizing}, and by \emph{zeroing} all the monomials  from $\I(V)$ that are on the left and right-hand side of \eqref{Kr1}, \eqref{Kr2} and \eqref{Kr5}, respectively.

If follows that
\begin{align}\label{gigia}
  \bar{\delta}_I^S(a_0 - a^\top x)= \bar{\delta}_I^S(a_0 - \sum_{i\in I} a_i) + h_I(x) \quad \text{for some } h_I(x)\in \I(V).
\end{align}
As said before, the term $\bar{\delta}_I^S(a_0 - \sum_{i\in I} a_i)$, that is on the right-hand side of \eqref{gigia}, is obtained from the left-hand side of \eqref{gigia} by multilinearizing, so replacing each occurrence of $x_i^2$ with $x_i$, and by zeroing all the monomials  from $\I(V)$ that appear on the left-hand side. Note that these latter monomials have degree at most $\pi+1$, since
they derive from multiplying a degree $\pi$ polynomial $\bar{\delta}_I^S$ with a linear function.
Therefore $\deg(h_I(x))\leq \pi+1$.
Then
{
\begin{align}\label{eq:pippo}
a_0 - a^\top x &= \left(a_0 - a^\top x\right)
\overbrace{\left(\sum_{I\in T} \bar{\delta}_I^S\right)}^{=1}
\overset{\eqref{gigia}}{=} \sum_{I\in T}\overbrace{\left(a_0 - \sum_{i\in I}a_i\right)}^{\geq 0}\bar{\delta}_I^S + f(x),
\end{align}
}
for $f(x)=\sum_{I\in T}h_I(x)\in \I(V)$ with $\deg(f)\leq \pi+1$.

From the above equivalence we see that $a_0 - a^\top x$ can be written$\pmod{\I(V)}$ as a conical combination of polynomials from $\{\bar{\delta}_I^S:I\in T\}$ of degree at most $\pi$.
The claim follows by transforming the above congruence$\pmod{\I(V)}$~\eqref{eq:pippo} into a congruence$\pmod{\IB}$, while still using bounded degree polynomials.

Since $f(x)\in \I(V)$, by looking at the definition of $\I(V)$ note that every monomial in $f(x)$ belongs to $\I(V)$ as well.
Then $f(x)=\sum_{I\in U} f_I \cdot x_I$, for some $U\subseteq 2^{[n]}$ such that, for all $I\in U$, we have $x_I\in \I(V)$ and $f_I\in \R$ (and, as already observed, $\deg(x_I)\leq \pi+1$).

If $f_I\geq 0$ then $f_I \cdot x_I\equiv f_I \cdot (x_I)^2\pmod \IB$; otherwise (i.e. $f_I<0$), since $x_I\in \I(V)$. Then, for some $\lambda, \gamma\geq 0$, there is a valid constraint from a conical combination of valid constraints $c_I(x):=\left(\lambda^\top (b-Ax) + \gamma^\top x\right)\geq 0$
that is violated by setting $x_i=1$ for $i\in I$, i.e. $c(x_I)<0$. Therefore (recall $x_I\cdot c_I(x_I)\equiv x_I \cdot c_I(x)\pmod \IB$) $$f_I \cdot x_I \equiv \left(\sqrt{\frac{f_I}{c_I(x_I)}} x_I\right)^2 c_I(x)\pmod \IB.$$
It follows that $a_0 - a^\top x$ admits a $(\pi+1)\text{-}\sos$ certificate:
\begin{align}\label{pipp2}
  a_0 - a^\top x &\equiv s_0 + \sum_{i=1}^m s_i g_i \pmod \IB, \quad \text{ for some } s_i\in \Sigma_{\pi+1},
\end{align}
where $g_i\geq 0$, for $i\in[m]$, denotes the $i$-th constraint from $b-Ax\geq 0$ and
$\Sigma_{\pi+1}:=\{\sum_i q_i^2:q_i\in \R[x]_{\pi+1}\}$.
\end{proof}


Let $P=\{x\in \R^n:0\leq x_i\leq 1  \ \forall i\in [n],Ax\leq b\}$ denote the linear relaxation of $\F_{A,b}$.
For $t\in \N$, recall that $P^*$ and $P^{(t)}$ denote the integer hull and the $t$-th CG closure, respectively, of the starting linear program $P$, and $opt^{(t)}( c ):= \max\{c^\top x: x\in P^{(t)}\}$. Without loss of generality, we will assume that $c\in \R_+^n$ (otherwise it is always optimal to set $x_i=0$ whenever $c_i\leq 0$).
Let $Sol(d)$ denote the set of feasible solutions for $d\text{-}\sos$ projected to the original space of the variables. Let $opt_d(c):=\max\{c^\top x: x\in Sol(d)\}$.

The following result shows that fixed rank CG closures of packing problems can be approximated to any arbitrarily precision, and in polynomial time, by using the standard $\sos$ hierarchy.
\begin{theorem}
For any fixed $t\in \N$ and $\eps>0$, there is an integer $d=d(t,\eps)$ such that $opt_d(c)\leq (1+\eps) opt^{(t)}( c )$, for all $c\in \R_+^n$.
\end{theorem}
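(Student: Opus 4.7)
The plan is to combine Lemma~\ref{th:packing} with a Bienstock--Zuckerberg-style scaling argument by induction on the CG rank. Set $\pi := \lceil t(1+\eps)/\eps \rceil$, $d := \pi+1$, and $\alpha_\tau := \bigl(\tfrac{\pi+1}{\pi+2}\bigr)^\tau$ for $0 \leq \tau \leq t$. Bernoulli's inequality gives $\alpha_t \geq 1 - t/(\pi+2) \geq 1/(1+\eps)$. Let $x^* \in Sol(d)$ attain $opt_d(c)$. The positivity and multilinearity axioms of $d$-$\sos$ force $x^* \in [0,1]^n$ and $Ax^* \leq b$, so $x^* \in P = P^{(0)}$. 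Moreover, by Lemma~\ref{th:packing}, $x^*$ satisfies every valid pitch-at-most-$\pi$ inequality for $\F_{A,b}$; to accommodate CG cuts with negative coefficients I would read Lemma~\ref{th:packing} in the literal sense (in the variables $x_i$ and $1-x_i$), which the proof supports since it is phrased through the literal polynomials $\bar{\delta}_J^S$.

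The main technical claim is $\alpha_\tau x^* \in P^{(\tau)}$ for every $0 \leq \tau \leq t$, proved by induction on $\tau$. The base $\tau = 0$ is immediate. For the inductive step, fix a rank-$\tau$ CG cut $a^\top x \leq a_0$ arising from $a^\top x \leq a_0 + \beta$ valid for $P^{(\tau-1)}$ with $a \in \Zz^n$, $a_0 \in \Zz$, $\beta \in [0,1)$; after the literal substitution $y_j := 1-x_j$ on indices with $a_j < 0$, we may assume $a \geq 0$ and $a_0 \geq 0$. Since $0 \in P^{(\tau-1)}$ (as $0$ is an integer feasible point) and $\alpha_{\tau-1} x^* \in P^{(\tau-1)}$ by induction, convexity gives $\alpha_\tau x^* \in P^{(\tau-1)}$, so only the new rank-$\tau$ cut needs verification. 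Let $h := |\supp(a)|$. If the pitch is at most $\pi$, Lemma~\ref{th:packing} gives $a^\top x^* \leq a_0$ and scaling by $\alpha_\tau \leq 1$ preserves the bound. If the pitch exceeds $\pi$ but $h \leq \pi$, then $a_0 \geq \sum_i a_i$ and the inequality is trivially valid on $[0,1]^n$. The remaining case has pitch $\geq \pi+1$ and $h \geq \pi+1$: integrality of $a$ forces $a_0 \geq \sum_{i=1}^{\pi+1} a_{(i)} \geq \pi+1$, and the inductive hypothesis $a^\top(\alpha_{\tau-1} x^*) \leq a_0 + \beta$ yields
\begin{equation*}
a^\top(\alpha_\tau x^*) = \frac{\alpha_\tau}{\alpha_{\tau-1}} \, a^\top(\alpha_{\tau-1} x^*) \leq \frac{\pi+1}{\pi+2}(a_0 + \beta) < \frac{\pi+1}{\pi+2}(a_0 + 1) \leq a_0,
\end{equation*}
where the final inequality uses $a_0 \geq \pi+1$.

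Applying the claim at $\tau = t$ gives $\alpha_t x^* \in P^{(t)}$, hence $c^\top x^* = c^\top(\alpha_t x^*)/\alpha_t \leq opt^{(t)}(c)/\alpha_t \leq (1+\eps)\,opt^{(t)}(c)$, establishing the theorem with $d = d(t,\eps) = O(t/\eps)$. The main obstacle I anticipate is the literal reformulation needed to handle CG cuts with negative coefficients; while extending Lemma~\ref{th:packing} to literal-indexed pitch is a routine rephrasing of its proof, care is needed to ensure the rank-$\tau$ CG cuts of $P$ are fully covered by this extended notion. A secondary subtlety is maintaining $\alpha_\tau x^* \in P^{(\tau-1)}$ through the induction, which is clean here because packing relaxations of non-negative LPs contain the origin.
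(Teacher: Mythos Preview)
Your argument is essentially the paper's: choose the $\sos$ degree so that a geometric shrink factor raised to the $t$-th power stays above $1/(1+\eps)$, induct on the CG rank, dispatch low-pitch cuts with Lemma~\ref{th:packing}, and absorb high-pitch cuts by the shrink since integrality of $a$ forces the right-hand side to be at least $\pi+1$. On your flagged obstacle, note that a literal substitution would actually spoil the scaling step (because $1-\alpha_\tau x_j^*\neq\alpha_\tau(1-x_j^*)$), but it is never needed: since $A,b\geq 0$, the polytope $P$---and inductively every $P^{(\tau)}$---is down-monotone in $[0,1]^n$, so each CG cut is dominated by one with $a\geq 0$, which is exactly the content of the paper's unexplained ``w.l.o.g.\ $a$ non-negative and integral.''
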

\begin{proof}
For any fixed $t\in \N$ and $\eps>0$, choose $d\in \N$ such that $(d/(d-1))^t \leq 1+\eps$.
Let $opt_d$ (or $opt^{(t)}$) denote $opt_d(c)$ (or $opt^{(t)}(c)$), for short.

If $opt_d \leq opt^{(t)}$ than we are done.
Otherwise ($opt_d > opt^{(t)}$),
let $x^{(t)}:=\phi_t \cdot x^*$ where $\phi_t:=(\frac{d-1}{d})^{t}$. It follows that $opt_d=c^\top x^*\leq (1+\eps) c^\top x^{(t)}$.
We show that $x^{(t)}$ is feasible for  the rank-$t$ CG closure. This imples that $c^\top x^{(t)}\leq opt^{(t)}$, and the claim follows since $opt_d\leq (1+\eps) c^\top x^{(t)}\leq (1+\eps)opt^{(t)}$.

The proof is by induction on $t$. As a base of induction note that when $t=0$ then clearly $x^{(0)}\in P=P^{(0)}$.

Assume now, by the induction hypothesis, that $x^{(t-1)}\in P^{(t-1)}$ for any rank equal to $(t-1)$ with $t\geq 1$. We need to show that it is valid also for rank $t$.
If the pitch of a generic rank-$t$ valid inequality for $P^{(t)}$ is at most~$d-1$, then by Lemma~\ref{th:packing} it follows that any feasible solution $x\in Sol(d)$ (and therefore $x^{(t)}$) satisfies this inequality.
Otherwise, consider a generic rank-$t$ valid inequality $\lfloor a_0\rfloor - a^\top x\geq 0$ of pitch larger than $d-1$, where
$a_0 - a^\top x\geq 0$ is any valid inequality from the closure $P^{(t-1)}$. By induction hypothesis note that $a_0 - a^\top x^{(t-1)}\geq 0$.
Since the pitch is higher than $d-1$ then $a_0>d-1$ (vector $a$ can be assumed, w.l.o.g., to be non-negative and integral) and therefore $\frac{a_0}{\lfloor a_0\rfloor} \leq \frac{d}{d-1}$ and by multiplying the solution $x^{(t-1)}\in P^{(t-1)}$ by $(d-1)/d$ we obtain a feasible solution for the rank-t CG closure.
\qd
\end{proof}
%
\section{Conclusions and Future Directions}\label{sect:conclusions}
A breakthrough result \cite{LeeRS15} of Lee, Raghavendra, and Steurer shows that the standard $\sos$ is ``\emph{optimal}'' for Constraint Satisfaction Problems among all semidefinite programs of comparable size.
In ~\cite{KurpiszLM17} and \cite{Kurpisz2017}, the standard $\sos$ is shown to be ``\emph{pessimal}'' for simple problems, meaning that it requires exponential size to get any bounded approximation.

The standard $\sos$ has been defined with respect to the standard monomial basis, which looks like a ``natural'' choice, but in fact it turns out to be an \emph{arbitrary} choice. This way can be ``good'' or ``bad'' depending on the problem at hand.

In this paper, we have shown a first example of $\sos$ equipped with a different basis,
that is useful in \emph{asymmetric} situations. The proposed approach overcomes some provable limitations of the standard $\sos$.

A very challenging open question is to understand what is the ``right'' basis for the problem that we want to address. Roughly speaking, can we transform the Recipe~\ref{recipe} into an effective algorithm? Any progress in this direction would be of considerable interest.
%
%
\begin{tcolorbox}[colframe=white, colback=white]
\emph{A mia mamma, che esiste per mancanza.}
\end{tcolorbox}


{
\bibliographystyle{abbrv}
\bibliography{ref}
}
%

\ifshort
\end{document}
\else

\appendix

\section{Sum of Squares Over the Boolean Hypercube}
\subsection{The Complexity of Computing $\Ss$-$\sos$ Certificates}\label{sect:qsosproofscomplexity}

\begin{lemma}\label{th:sdpsize}
Consider any given set of polynomials $\Ss\subseteq \R[\x]/\IB$ with $|\Ss|=n^{O(d)}$, for some $d\in\N$.
Then the existence of a $\Ss$-$\sos$ certificate can be decided by solving a semidefinite programming feasibility problem. The dimension of the matrix inequality is bounded by $n^{O(d)}$.
\end{lemma}

In the following we sketch the proof of the above lemma.
%
For simplicity, we sketch this for the case where the semialgebraic set $\F$ has no inequalities (so $m=0$ in \eqref{eq:semialgebraicset}).
The generalization to the case with inequalities follows in a similar vein (see e.g. Example~\ref{ex:sosauto}).

\paragraph{Testing if $f(x)$ is $\Ss$-$\sos$} We start recalling how to check if a polynomial $f(x)$ is $\Ss$-$\sos$, i.e check if (see Definition~\ref{def:polysos}): $$f(x)=\sum_{i=1}^r q_i(x)^2 \text{ for some }q_1,\ldots,q_r\in \Ss.$$
Note that it is ``$f(x)=\ldots$'' and not ``$f(x)\equiv \ldots \pmod{\IB}$''.
Then we show how to generalize this for checking whether $f(x)$ is $Q$-$\sos \pmod{\IB}$.

By overloading notation, let $x$ denote the vector of all monomials in $\R[\x]_n$ in a fixed order, say degree lexicographic. Recall that a polynomial $s(x)$ is a sum of squares if and only if there exists a positive semidefinite matrix $W$, denoted $W\succeq 0$, such that $s(x)= x^\top W x$. We review this in the following lemma.
\begin{lemma}\label{th:sospsd}
Let $s(x)\in \R[\x]$. The following statements are equivalent:
\begin{enumerate}
\item $s(x)$ has a representation as a sum of squares in $\R[\x]$.
\item There is a matrix $W$ such that $s(x) = x^\top W x$ with $W\succeq 0$, where $x$ denotes the vector of all monomials in $\R[\x]_n$.
\end{enumerate}
\end{lemma}
\begin{proof}
The matrix $W$ is PSD if and only if there is a factorization $W=V^\top V$. If this holds then
$s(x)=x^\top W x= x^\top V^\top V x = (V x)^\top (V x)=\sum_i \left((Vx)_i\right)^2$
is a $\sos$. Vice versa, if $s(x)=\sum_i \left((Vx)_i\right)^2$ then going backward in the previous equality the claim follows.
\qd
\end{proof}
By using the previous lemma it follows that $f(x)$ is a $\sos$ if and only if there is a symmetric matrix $W$ (known as the Gram matrix of the $\sos$ representation) that satisfies:
$s(x)=x^\top W x, \  W\succeq 0$.
Notice that the latter is a semidefinite program, since $f(x)=x^\top W x$ is affine in the matrix $W$, and thus the set of possible Gram matrices $W$ is given exactly by the intersection of an affine subspace and the cone of positive semidefinite matrices.

Consider any finite set of polynomials $\Ss\subseteq \R[\x]$ with $|\Ss|=n^{O(d)}$ and let $Q=\spns$ (for any positive constant $d$). Let $S$ be the matrix having as columns the spanning set $\Ss$. It follows that for any vector $q\in Q$ there is a vector $u\in \R^{|\Ss|}$ such that $q=S u$.

Since we want to check if $f(x)$ is $Q$-$\sos$ then we need to check if $f(x)=\sum_i \left((Vx)_i\right)^2$ and each $(Vx)_i$ belongs to $Q$ and therefore this happens if it exists a $u_i\in \R^{|\Ss|}$ such that $(S u_i)^\top x = (Vx)_i$. Let $U$ denote the matrix whose columns are the $u_i$, then we have the following:
$\sum_i \left((Vx)_i\right)^2 = x^\top S (UU^\top) S^\top x$.
%
Polynomials are expressed in the new basis $S^\top x$ (this basis is in general not isomorphic to the standard monomial basis of degree $d$) and the complexity is given by the size of the matrix $UU^\top$, i.e. $n^{O(d)}$.

\paragraph{Testing if $f(x)$ is $\Ss$-$\sos \pmod{\IB}$} The previous method can be adapted to check whether $f(x)$ is $\Ss$-$\sos \pmod{\IB}$. Actually, it is more general, it can be adapted to check whether $f(x)$ is $\Ss$-$\sos \pmod{\I}$, where $\I\subseteq \IB$ is any ideal for which we have the \GB basis $G$ (note that $\{x_i-x_i^2, i\in[n]\}$ is the \GB basis for $\IB$). We explain this in the following.

The vector $x$ can be replaced by the vector of all the different monomials after the division by $G$ (these are all the multilinear monomials if $\I=\IB$) since
$\R[\x]/\I$ is spanned by these monomials. This can decrease the size of the unknown matrix $W$, making the final SDP smaller than before. Setting up $W$ as a symmetric matrix of indeterminates we proceed as explained before (so $W= S (UU^\top) S^\top$). Let $s=x^\top W x$.
Let the normal forms of $f$ and $s$ with respect to a reduced \GB basis $G$ of $\I$ be $\bar{f}$ and $\bar{s}$, respectively (for the case $\I=\IB$, $\bar{f}$ and $\bar{s}$ are the multilinear representation of $f$ and $s$, respectively). Then since $f \equiv \bar{f} \pmod \I$ and $s \equiv \bar{s} \pmod \I$ and $\bar{f}$ and $\bar{s}$ are fully reduced with respect to $G$, we have that $f \equiv s \pmod \I$ if and only if $\bar{f} = \bar{s}$. Therefore, to check if $f(x)$ is $\Ss$-$\sos \pmod{\I}$, we equate the coefficients of $\bar{f}$ and $\bar{g}$ for like monomials and check whether the resulting linear system in the $W_{ij}$'s has a solution with $W \succeq 0$.
%

\begin{example}\label{ex:sosauto}
Consider the following set $\F=\{x\in \R^2: x_1-x_1^2=x_2-x_2^2=0, x_1+x_2-\eps\geq 0\}$ where $\eps\in(0,1)$. We want to show that the valid inequality $x_1+x_2-1\geq 0$ admits a $\Ss$-$\sos$ certificate, where $\Ss$ is the set of the elementary symmetric polynomials in 2 variables, i.e $\Ss=\{1,x_1+x_2,x_1x_2\}$ and therefore $\spns$ is the ring of all symmetric polynomials. Let $x=[1,x_1,x_2,x_1x_2]^\top$, the matrix $S$ is equal to

$ S = \begin{bmatrix}
       1 & 0 & 0 \\
       0 & 1 & 0 \\
       0 & 1 & 0 \\
       0 & 0 & 1
     \end{bmatrix}$ and the new basis is
$S^\top x = [1,x_1+x_2,x_1x_2]^\top$.

We want to show that $x_1+x_2-1\geq 0$ admits a $\Ss$-certificate, therefore we need to show that
\begin{align}\label{eq:qsosformexample}
 x_1+x_2-1\equiv s_0(x)+ s_1(x)(x_1+x_2-\eps) \pmod{\IB}.
\end{align}
where $s_0,s_1\in \{s\in\R[\x]: s=\sum_i q_i(x)^2, q_i\in \spns\}$. By Lemma~\ref{th:sospsd} there are two PSD matrices $W_0$ and $W_1$ such that $s_0(x)=x^\top W_0 x$ and $s_1(x)=x^\top W_1 x$ with the additional constraint on the structure of $W_0$ and $W_1$ given by the restriction that $q_i\in \spns$.
Let us first perform the change of basis $\sigma_i=(S^\top x)_i$ for $i=0,1,2$. So the new variables are $\sigma_0=1$, $\sigma_1=x_1+x_2$ and $\sigma_2=x_1x_2$ and the corresponding vector form $\sigma=[1,\sigma_1,\sigma_2]^\top$. Note that in the new basis $\sigma_1^2\equiv \sigma_1+2\sigma_2 \pmod{\IB}$, $\sigma_2^2\equiv \sigma_2 \pmod{\IB}$ and $\sigma_1 \sigma_2 \equiv 2\sigma_2 \pmod{\IB}$ which correspond to the multilinear forms in the new basis.
By rephrasing our goal in the new basis, we need to show that \begin{align}\label{eq:qsosformexample2}
 \sigma_1-1\equiv \sigma^\top T_0 \sigma+ (\sigma^\top T_1 \sigma)(\sigma_1-\eps) \pmod{\IB},
\end{align}
for some PSD matrices $T_0,T_1$ with
$ T_i = \begin{bmatrix}
       t_{i00} & t_{i01} & t_{i02} \\
       t_{i01} & t_{i11} & t_{i12} \\
       t_{i02} & t_{i12} & t_{i22}
     \end{bmatrix}$
for $i=0,1$.
By writing~\eqref{eq:qsosformexample2} in the multilinear form, our goal is to prove that there are two PSD matrices $T_0,T_1$ such that the following is satisfied:
{\small{
\begin{align*}
&\sigma_1-1= \underbrace{t_{000}-\eps t_{100}}_{\alpha} + \underbrace{(t_{011}+2 t_{001}+ t_{100} + (t_{111} + 2 t_{101})(1- \eps) )}_{\beta} \sigma_1 +\\ &\underbrace{(2t_{011}+ t_{022} +2t_{002}+4 t_{012}+ 6 t_{111} + 4 t_{101}+2t_{122}+ 4t_{102}+8t_{112}-\eps(2t_{111}+t_{122}+2t_{102}+4t_{112})}_{\gamma} \sigma_2
\end{align*}
}}
So the solution of the following SDP$=\{\alpha =-1, \beta=1, \gamma=0, T_0\succeq 0, T_1\succeq 0\}$ gives the desired $\Ss$-$\sos$ certificate.
By choosing $T_0=[0,0,1]^{\top}[0,0,1]$ and $T_1=\frac{1}{\eps}[1,-1,1]^{\top}[1,-1,1]$ the SDP is satisfied.

\end{example}


 \section{$k$-Small Minimal Covers}\label{sect:smallmincovers}
Consider a generic inequality of any given integer problem as written in the covering form, i.e. inequality $g(x)=a^\top x-a_0\geq 0$ with $a\geq 0$ (here, abusing notation, every variable $x_j$ is either the original one or its negation $1-x_j$). For each such constraint let $V_a=\supp(a)$ be the set of variables in this constraint. Add to the $\Ss_{A}(k)$-$\sos$ polynomials the set of all $C$-symmetric polynomials with $C\subseteq V_a$ and $|C|\leq k$. Consider any valid $k$-small minimal cover of type $\sum_{i\in C} x_i\geq 1$, with $|C|\leq k$ (the other cases are similar). 
We sketch that it admits an $\Ss_{A}(k)$-$\sos$ certificate:
{\small
\begin{align*}
&\sum_{i\in C} x_i- 1
= \left(\sum_{i\in C} x_i- 1\right)\overbrace{\left(\sum_{i=0}^{|C|} \underbrace{\sum_{I\subseteq C:|I|=i} \delta^{C}_I}_{symmetric}\right)}^{=1}\equiv
\left(\sum_{i=0}^{|C|} (i- 1) \underbrace{\sum_{I\subseteq C:|I|=i} \delta^{C}_I}_{symmetric}\right)\\
&\equiv \overbrace{\left(\frac{-1}{\sum_{i\in V_a\setminus C} a_i-a_0}\right)}^{> 0}\delta^{C}_0 \overbrace{\left(\sum_{i\in V_a\setminus C} a_i + \sum_{i\in C} a_i x_i-a_0\right)}^{<0}+\left(\sum_{i=1}^{|C|} (i- 1) \underbrace{\sum_{I\subseteq C:|I|=i} \delta^{C}_I}_{symmetric}\right)\\
&\equiv \overbrace{\left(\frac{-1}{\sum_{i\in V_a\setminus C} a_i-a_0}\right)}^{> 0}\delta^{C}_0 \left(\sum_{i\in V_a} a_ix_i-a_0 +\sum_{i\in V_a\setminus C}a_i(1-x_i)\right)+\left(\sum_{i=1}^{|C|} (i- 1) \underbrace{\sum_{I\subseteq C:|I|=i} \delta^{C}_I}_{symmetric}\right)\\
&\equiv
\underbrace{\left(\sqrt{\frac{-1}{\sum_{i\in V_a\setminus C} a_i-a_0}} \delta^{C}_0\right)^2}_{s(x)} \underbrace{\left(\sum_{i\in V_a} a_ix_i-a_0\right)}_{g(x)} +  \sum_{i\in V_a\setminus C}\underbrace{\left(\sqrt{\frac{-a_i}{\sum_{i\in V_a\setminus C} a_i-a_0}} \delta^{C}_0\right)^2}_{s_i(x)}\underbrace{(1-x_i)}_{g_i(x)}+\\ &+\underbrace{\left(\sum_{i=1}^{|C|} \sqrt{i- 1 }{\sum_{I\subseteq C:|I|=i} \delta^{C}_I}\right)^2}_{s_0(x)}
\end{align*}
}

\subsection{An Application}
As  in~\cite{BienstockZ04,zuckerberg2004set}, Theorem~\ref{th:setcover} can be generalized to handle 0/1 integer problems with non-negative constraints having pitch bounded by a constant $p$. More precisely, consider the feasible region for the $0$-$1$  problem defined by $A$:
\begin{align}\label{setcov2}
\F_A=\{x\in \{0,1\}^n:Ax\geq b\}
\end{align}
where $b\in \R^{m}_+$ and each constraint in $Ax\geq b$ has pitch at most $p$. (For example any inequality $a^\top x-a_0\geq 0$ with non-negative integral coefficients $a_i\in \{0,1,\ldots,p\}$ has pitch at most $p$.) In this case the number of minimal covers is polynomially bounded. Since the integral polytope defined by using the minimal covers and the integrality constraints coincides with~\eqref{setcov2} (see e.g. \cite{Conforti:2014}), then we can extend Theorem~\ref{th:setcover} to this more general case.


\section{Omitted proofs}
\subsection{Proof of Theorem \ref{th:maxsymconstr}}\label{sect:toyd}

Before proving the bound given in Theorem~\ref{th:maxsymconstr} on the number of levels for our simple example we need some preliminaries. In particular we first introduce the $\sos$ hierarchy in matrix form that is more convenient for proving lower bounds. In the following we assume that the $\sos$ hierarchy is the ``standard'' one, namely the one that follows by considering the subspace of bounded degree polynomials as functional basis.
\subsubsection{The Sum-of-Squares Hierarchy in Matrix Form}
Consider the $\sos$ hierarchy for approximating the convex hull of the semialgebraic set
\begin{equation} \label{eq:definition_of_set_K}
P = \set{x \in \set{0,1}^n~:~g_{\ell}(x)\geq 0, \forall \ell\in [p]}
\end{equation}
where $g_\ell(x)$ are linear constraints and $p$ a positive integer. The form of the $\sos$ hierarchy we use here is equivalent to the one introduced before and follows from applying a change of basis to the dual certificate of the refutation of the proof system (see~\cite{KLMipco16} for the details on the change of basis). We use this change of basis  in order to obtain a useful decomposition of the moment matrices as a sum of rank one matrices of a special kind.

For any $I\subseteq N=[n]$, let $x_I$ denote the $0/1$ solution obtained by setting $x_i = 1$ for $i \in I$, and $x_i = 0$ for $i\in N\setminus I$. For a function $f:\set{0,1}^n \rightarrow \R$, we denote by $f(x_I)$ the value of the function evaluated at $x_I$. In the $\sos$ hierarchy defined below there is a variable $\y_I$ that can be interpreted as the ``relaxed'' indicator variable for the solution $x_I$. We point out that in this formulation of the hierarchy the number of variables $\{\y_I:I\subseteq N\}$ is exponential in $n$, but this is not a problem in our context since we are interested in proving lower and upper bounds rather than solving an optimization problem.

Let $\PS_t(N)$ be the collection of subsets of $N$ of size at most $t\in\mathbb{N}$. For every $I \subseteq N$, the $q$-\text{zeta vector}  $Z_I \in \mathbb{R}^{\PS_q(N)}$ is a $0/1$ vector with $J$-th entry ($|J| \leq q$) equal to $1$ if and only if $J \subseteq I$.\footnote{In order to keep the notation simple, we do not emphasize the parameter $q$ as the dimension of the vectors should be clear from the context.}
Note that $Z_IZ_I^\top$ is a rank one matrix and the matrices considered in Definition~\ref{def:sos_definition_integer_hull} are linear combinations of these rank one matrices.

\begin{definition}~\label{def:sos_definition_integer_hull}
 The $t$-th round SoS hierarchy relaxation for the set $P$ as given in~\eqref{eq:definition_of_set_K}, denoted by $\SoS_t(P)$, is the set of variables $\{\y_I \in \mathbb{R}:  \forall I \subseteq N\}$ that satisfy
 \begin{eqnarray}
 \sum_{\substack{I \subseteq N}} \y_I&=& 1, \label{eq:sos_sum_condition_ih} \\
 \sum_{\substack{I \subseteq N}} \y_I Z_I Z_I^\top &\succeq& 0, \text{ where } Z_I \in \mathbb{R}^{\PS_{t+1}(N)} \label{eq:sos_variables_ih}\\
 \sum_{\substack{I \subseteq N}} g_\ell(x_I)\y_I Z_IZ_I^\top &\succeq& 0, ~ \forall \ell\in [p]  \text{, where }  Z_I \in \mathbb{R}^{\PS_{t}(N)}  \label{eq:sos_constraints_ih}
\end{eqnarray}
\end{definition}
It is straightforward to see that the SoS hierarchy formulation given in Definition~\ref{def:sos_definition_integer_hull} is a relaxation of the integral polytope. Indeed consider any feasible integral solution $x_I \in P$ and set $\y_I=1$ and the other variables to zero. This solution clearly satisfies~\eqref{eq:sos_sum_condition_ih} and~\eqref{eq:sos_variables_ih} because the rank one matrix $Z_IZ_I^\top$ is positive semidefinite (PSD), and~\eqref{eq:sos_constraints_ih} since $x_I\in P$.

For a set $Q \subseteq [0,1]^n$, we define the projection from $\SoS_t(Q)$ to $\mathbb{R}^n$ as $x_i = \sum_{i \in I \subseteq N} y_I^N$ for each $i \in \set{1,...,n}$. The \emph{SoS rank} of $Q$, $\rho(Q)$, is the smallest $t$ such that $\SoS_t(Q)$ projects exactly to the convex hull of $Q \cap \set{0,1}^n$.

\subsubsection{Using Symmetry to Simplify the PSDness Conditions} \label{sect:main_theorem}

In this section we present a theorem given in~\cite{KLMipco16} that can be used to simplify the PSDness conditions~\eqref{eq:sos_variables_ih} and~\eqref{eq:sos_constraints_ih} when the problem formulation is very symmetric. More precisely, the theorem can be applied whenever the solutions and constraints are symmetric in the sense that $w_I^N = w^N_J$ whenever $|I|=|J|$ where $w_I^N$ is understood to denote either $y^N_I$ or $g_\ell(x_I)y^N_I$. In what follows we denote by $\mathbb{R}[x]$ the ring of polynomials with real coefficients and by $\mathbb{R}[x]_d$ the polynomials in $\mathbb{R}[x]$ with degree less or equal to $d$.

\begin{theorem}[\cite{KLMipco16}] \label{thm:PSD_as_polynomial}
For any $t\in \{1,\ldots,n\}$, let $\mathcal{S}_t$ be the set of univariate polynomials $G_h(k)\in \R[k]$, for $h \in \{0,\ldots,t\}$, that satisfy the following conditions:
\begin{align}
G_h(k)&\in \R[k]_{2t} \label{eq:degree}\\
G_h(k)&=0 \qquad \text{for }k\in \{0,\ldots,h-1\}\cup \{n-h+1,\ldots,n\} \text{, when }h\geq 1\label{eq:zeros}\\
G_h(k)&\geq0 \qquad \text{for }k\in [h-1, n-h+1]\label{eq:geq0}
\end{align}
For any fixed set of values $\{w^N_k \in \R:k =0,
\ldots,n\}$,
if the following holds
\begin{align}
\sum_{k=h}^{n-h}\binom{n}{k} w^N_k G_h(k) &\geq 0 \qquad \forall G_h(k)\in \mathcal{S}_t \label{eq:sym_psd_cond}
\end{align}
then
$$
 \sum_{k = 0}^n w^N_k \sum_{\substack{I \subseteq N \\ |I| = k}} Z_IZ_I^\top \succeq 0 \qquad
$$
where $Z_I \in \mathbb{R}^{\PS_{t}(N)}$.
\end{theorem}
Note that polynomial $G_h(k)$ in~\eqref{eq:geq0} is non-negative in a \emph{real interval}, and in~\eqref{eq:zeros} it is zero over a \emph{set of integers}. Moreover, constraints \eqref{eq:sym_psd_cond} are trivially satisfied for $h> \lfloor n/2 \rfloor$.

\subsubsection{The Simple Example Proof}
The single constraint of the simple example can be rewritten, w.l.o.g., as follows:
$$g(x)=\sum_{i=1}^n x_i - L+1-\frac{1}{P}\geq 0$$
where $L$ and $P$ are positive integers. Clearly any integral $\{0,1\}$-solution requires to set to one at least $L$ variables.

Let $(LP)$ be the polytope $\left\{x \in [0,1]^n : \\ g(x) \geq 0 \right\}$. The $\SoS$ \emph{rank} is the minimal number of levels needed to obtain the integer hull $(IP)$ of $(LP)$.

In the following we will restrict the analysis to $L\leq\lceil n/2 \rceil$. Consider any solution that satisfies the following conditions:
\begin{equation}\label{sol}
\left\{
\begin{array}{ll}
y_k^N =0 \qquad \text{for } k\leq L-2\\
y_k^N >0 \qquad \text{for } k\geq L-1\\
\sum_{k=0}^n y_k^N \binom{n}{k}=1
\end{array}
\right.
\end{equation}
Note that in~\eqref{sol} we do not impose any restriction on the exact value of the positive probabilities. 
The value of the suggested solution is
$\sum_{k=L-1}^n \binom{n}{k}\y_k k$.
By choosing $P$ sufficiently large we will show that almost all the probability mass (but an arbitrarily small part) can be assigned to $\y_{L-1}$, resulting therefore into an objective function value equal to $L-1+\eps$, (for any $\eps>0$) and an integrality gap of $\frac{L}{L-1+\eps}$.

\begin{lemma}\label{maxsymconstr}
For $L\leq\lceil{n/2}\rceil$ and a suitable large value of $P$ that depends on $n$ the $\SoS$ rank for $(LP)$ is at least $n-L+1$.
\end{lemma}
\begin{proof}

For any solution that satisfies~\eqref{sol} there is a unique nonpositive term in conditions \eqref{eq:sym_psd_cond}, namely $z_{L-1}^N G_h(L-1)=y_{L-1}^N(-1/P) G_h(L-1)=-\eps G_h(L-1)$ (for some $\eps=y_{L-1}^N/P>0$), where we use the following notation $z_{k}^N=y_k^N g(k)$ (with $g(k)$ denoting the value of the constraint $g(x)$ when exactly $k$ variables are set to one).

If we chose $h$ such that $L-1=n-h$ then we would have that $z_k^N G_h(k)$ is equal to zero for all $k\not = n-h$, and by choosing $G_h(k)$ such that $G_h(L-1)>0$ 
we would have that \eqref{eq:sym_psd_cond} is never satisfied. To avoid this problem we assume that $L-1\leq n-h-1$ and since $h\leq \lfloor{n/2}\rfloor$, the claim holds when $L\leq n-\lfloor{n/2}\rfloor=\lceil{n/2}\rfloor$.

According to Theorem~\ref{thm:PSD_as_polynomial} and~\eqref{sol} note that
\begin{itemize}
\item $G_h(k)$ has $2t$ roots.
\item $G_h(k)$ has at least $h-1+1+n-(n-h+1)+1=2h$ roots outside the (open) interval $(h-1,\ldots,n-h+1)$.
\item $G_h(k)$ has at most $2(t-h)$ roots within the (open) interval $(h-1,\ldots,n-h+1)$. Moreover $G_h(k)\geq 0$ for any $k\in(h-1,\ldots,n-h+1)$ and therefore the at most $2(t-h)$ roots that are within the (open) interval $(h-1,\ldots,n-h+1)$ must appear in pairs. It follows that $G_h(k)$ has at most $t-h$ different roots within the (open) interval $(h-1,\ldots,n-h+1)$.
\end{itemize}
Consider any $h$ such that $h\leq L-1\leq n-h-1$ (if $L-1\leq h-1$ then \eqref{eq:sym_psd_cond} is trivially satisfied).
Note that there are $n-h-L+1$ terms $z^N_k>0$ for $k\in \{L,\ldots,n-h\}$ (note that $L\leq n-h$ by assumption, so set $\{L,\ldots,n-h\}$ is never empty).
From the above arguments we know that $G_h(k)$ has at most $t-h$ different roots within the (open) interval $(h-1,\ldots,n-h+1)$.
So if $t-h$ is strictly smaller than the number $n-h-L+1$ of terms $z^N_k>0$ (with $k\in \{L,\ldots,n-h\}$) then it exists a $k^*\in \{L,\ldots,n-h\}$ that is not a root for $G_h(k)$ and such that $z^N_{k^*} \binom{n}{k^*} G_h(k^*)>0$ (recall that $G_h(k)\geq 0$ within the considered interval which implies that $G_h(k^*)>0$). The latter condition is satisfied when  $t-h\leq n-h-L$, namely when $t\leq n-L$.
It follows that if $t\leq n-L$ then there exists a $k^*\in \{L,\ldots,n-h\}$ such that $z^N_{k^*} \binom{n}{k^*} G_h(k^*)>0$. 
Moreover, let $r_1,\ldots,r_{2t}$ be the roots of $G_h(x)$. Then $k^*\in \{L,\ldots,n-h\}$ can be chosen such that the following two conditions are both satisfied:
\begin{align}
&|k^*-r_i|\geq 1/2, \quad \text{ for every } i\in[2t], \label{rootdist}\\
&z^N_{k^*} \binom{n}{k^*} G_h(k^*)>0. \label{poscond}
\end{align}

Let $j^*$ such that $k^*=L-1+j^*$, where $j^*\in\{1,\ldots,n-h-L+1\}$. The claim follows by showing how to choose $P$ such that:
$$z^N_{L-1+j^*} \binom{n}{L-1+j^*} G_h(L-1+j^*)>\frac{\y_{L-1}}{P}\binom{n}{L-1}G_h(L-1).$$
From~\eqref{poscond} the above condition is equivalent to satisfy the following
\begin{align}\label{tempcond}
z^N_{L-1+j^*} >\frac{\y_{L-1}}{P}\frac{\binom{n}{L-1}}{\binom{n}{L-1+j^*}}\frac{G_h(L-1)}{G_h(L-1+j^*)}.
\end{align}
Clearly, the interesting cases are when $G_h(L-1)>0$. By the latter, \eqref{rootdist} and \eqref{poscond}, we have that:
\begin{align}\label{factUB}
\frac{G_h(L-1)}{G_h(L-1+j^*)}=\prod_{i=1}^{2t}\frac{ |L-1-r_i|}{ |L-1+j^*-r_i|}
\leq \prod_{i=1}^{2t}\left ( 1+ \frac{ j^*}{ |L-1+j^*-r_i|}\right )
\leq \prod_{i=1}^{2t}\left ( 1+ 2j^*\right ).
\end{align}
By~\eqref{factUB}, if the following is satisfied then \eqref{tempcond} holds.
\begin{align}\label{suffcond}
z^N_{L-1+j^*} >\frac{\y_{L-1}}{P}\frac{\binom{n}{L-1}}{\binom{n}{L-1+j^*}}\left ( 1+ 2j^*\right )^{2t}.
\end{align}
Then it is sufficient to choose $P$ such that
\begin{equation*}
P  \geq   2\frac{y_{L-1}^N}{y_{L-1+j^*}^N} \frac{\binom{n}{L-1}}{\binom{n}{L-1+j^*}} \frac{(1+2j^*)^{2t}}{j^*}.
\label{eq:knapsack_condition_in_poly_form}
\end{equation*}
Note that the right-hand side of the above inequality is bounded by a function of $n$.
\end{proof}

\section{On a very recent claim by Fiorini et al. \cite{FioriniHW17}}

We describe the approach suggested in \cite{FioriniHW17} for the 0/1 set cover problem which is also the main application advertised in the abstract. We observe in the following that their approach is essentially based on similar arguments as in this paper (formerly appeared in \cite{Mastrolilli17}) but specialized for a weaker framework that does not generalize to packing problems (see Section~\ref{sect:cgpack}).
We sketch this for pitch 2 in the following. The generalization to any pitch is straightforward.

Let $A$ be the $m\times n$ set cover matrix defined as in \eqref{setcov} and let $A_{ij}$ denote the $(i,j)$-entry of $A$. By overloading notation, we will interchangeably use $A_i$ to denote the $i$-th row of $A$ and its support. In \cite{FioriniHW17}, they consider the canonical monotone formula for set cover:
\begin{align}\label{eq:phiformula}
\phi:=\bigwedge_{i=1}^m \bigvee_{A_{ij=1}}x_j.
\end{align}
Starting with any convex set $Q\subseteq [0,1]^n$ containing $\F_A$ (see \eqref{setcov}) the improved relaxation is obtained  by recursively ``feeding'' $Q$ into the formula $\phi$, denoted by $\phi(Q)$ and defined as follows:
\begin{align}\label{eq:phi_Q_formula}
\phi(Q):=\bigcap_{i=1}^m \conv \left(\bigcup_{A_{ij=1}}\{x\in Q: x_j=1\}\right).
\end{align}
By starting with $Q:=[0,1]^n$ it is easy to see that $\phi([0,1]^n)=\{x\in [0,1]^n:Ax\geq e\}$.
This is also the base of induction in the proof of Lemma~\ref{th:bzcore} in this paper.
 So their approach obtains, after the first application, the starting linear program relaxation that corresponds to all pitch one inequalities (also used in~\eqref{eq:sc3}). Now let $Q:=\phi([0,1]^n)$ and let's analyze the second application, namely $\phi(Q)=\phi^2([0,1]^n)$:
\begin{align}\label{eq:phi_Q_formula2}
\phi(Q):=\bigcap_{i=1}^m \conv \underbrace{\left(\bigcup_{A_{ij=1}}\{x\in [0,1]^n:Ax\geq e, x_j=1\}\right)}_{U_i}.
\end{align}

It can be easily observed that the relaxation given by \eqref{eq:phi_Q_formula2} is obtained by considering the ``interaction'' of the $i$-th pitch 1 constraint (for any $i\in [m]$, see the outer intersection) with any other constraint $h\in [m]$ from $Ax\geq e$. The ``interaction'' is given by the common variables, denoted by $A_i\cap A_h$ in this paper, otherwise (i.e. $j\not\in A_h$) setting $x_j=1$ does not effect the corresponding constraint $A_h x\geq 1$. These are exactly the variables considered in $V_C$ with $C=\{i,h\}$.

Lemma~\ref{th:bzcore} gives a property of these interactions that are used for proving that these pairs of interactions are sufficient to show pitch 2 inequalities. Higher pitches use recursive polynomials which correspond to recursive application of $\phi$  by considering triplets for pitch 3 and so on, as in this paper.

\fi
\end{document}